\documentclass[twoside]{article}

%%%%%%%%% Packages %%%%%%%%%%%%%%%%%%%%%%%%%%%%
\usepackage{amssymb,amsthm}
\usepackage{amsmath}
\usepackage{mathabx}
\usepackage{fancyhdr}
\usepackage{url}
\usepackage{hyperref}
\usepackage{enumitem}

%%%%%%%%%%%%%%%% Theorems like environments
\newtheorem{thm}{Theorem}[section]
\newtheorem{cor}[thm]{Corollary}
\newtheorem{lem}[thm]{Lemma}
\newtheorem{defi}[thm]{Definition}
\newtheorem{prop}[thm]{Proposition}
\theoremstyle{remark}
\newtheorem{comentario}{Remark}

\newcounter{example}[section]
\setcounter{example}{0}
\makeatletter
\renewcommand{\p@example}{\thesection.} % "prefix" for cross-referencing
\makeatother
\newenvironment{example}{\refstepcounter{example}\noindent\textit{Example \arabic{section}.\arabic{example}}.}{ }

%%%%%%%%%%%%% New item with label%%%%%%%%%%%%%%%%%%%%%%%%%
\makeatletter
\newcommand{\labitem}[2]{%
\def\@itemlabel{#1}
\item
\def\@currentlabel{#1}\label{#2}}
\makeatother
\makeatletter
\def\namedlabel#1#2{\begingroup
    #2%
    \def\@currentlabel{#2}%
    \phantomsection\label{#1}\endgroup
}
\makeatother

%%%%%%%%%%%%%%%% Title 

\title{Periodic solutions of
Euler-Lagrange equations in an anisotropic Orlicz-Sobolev space setting  }
\author{
 Fernando D. Mazzone \thanks{SECyT-UNRC, FCEyN-UNLPam and CONICET}\\
Dpto. de Matem\'atica, Facultad de Ciencias Exactas, F\'{\i}sico-Qu\'{\i}micas y Naturales\\
Universidad Nacional de R\'{i}o Cuarto\\
(5800) R\'{\i}o Cuarto, C\'ordoba, Argentina,\\
\url{fmazzone@exa.unrc.edu.ar} \\
Sonia Acinas \thanks{SECyT-UNRC,  FCEyN-UNLPam and UNSL}\\
Dpto. de Matem\'atica, Facultad de Ciencias Exactas y Naturales\\
Universidad Nacional de La Pampa\\
(L6300CLB) Santa Rosa, La Pampa, Argentina\\
\url{sonia.acinas@gmail.com}\\[3mm]
}

\date{}

%%%%%%%%%%%%%%%%New commands

\newcommand{\orlnor}{\|_{L^{\Phi}}}
\newcommand{\linf}{\|_{L^{\infty}}}
\newcommand{\lphi}{L^{\Phi}}
\newcommand{\lpsi}{L^{\Phi^{\star}}}
\newcommand{\ephi}{E^{\Phi}}
\newcommand{\claseor}{C^{\Phi}}
\newcommand{\wphi}{W^{1}\lphi}
\newcommand{\sobnor}{\|_{W^{1}\lphi}}
\newcommand{\domi}{\mathcal{E}^{\Phi}}
\renewcommand{\b}[1]{\boldsymbol{#1}}
\newcommand{\rr}{\mathbb{R}}

\renewcommand{\leq}{\leqslant} 
\renewcommand{\geq}{\geqslant} 
\newcommand{\epsi}{E^{\Phi^{\star}}}
\newcommand{\Phie}{\Phi^{\star}}
\newcommand{\lip}{\mathop{\rm Lip}}

\DeclareSymbolFont{symbolsC}{U}{txsyc}{m}{n}
\DeclareMathSymbol{\strictif}{\mathrel}{symbolsC}{74}

\begin{document}

\maketitle
\begingroup%Locallizing the change to `thefootnote'.
    \renewcommand{\thefootnote}{}%Removing the footnote symbol.
    \footnotetext{%
    %   2010 Mathematics Subject Classification
    %   http://www.ams.org/msc/
    \textbf{2010  AMS Subject Classification.} Primary: 34C25.
    Secondary: 34B15.
    }%
        \footnotetext{%
    \textbf{Keywords and phrases.} 
Periodic Solutions,  Orlicz Spaces,   Euler-Lagrange,   Critical Points.
    }%
    \endgroup

\begin{abstract}
In this paper we consider the problem of finding periodic solutions of certain Euler-Lagrange equations which include, among others, equations involving the $p$-Laplace operator and, more generally, the $(p,q)$-Laplace operator. We employ the direct method of the calculus of variations in the framework of anisotropic Orlicz-Sobolev spaces. These spaces appear to be useful in formulating a unified theory of existence of solutions for such a problem.
\end{abstract}

\pagestyle{fancy} \headheight 35pt \fancyhead{} \fancyfoot{}

\fancyfoot[C]{\thepage} \fancyhead[CE]{\nouppercase{F.D. Mazzone   and S. Acinas}} \fancyhead[CO]{\nouppercase{\section}}

\fancyhead[CO]{\nouppercase{\leftmark}}

%\tableofcontents

\section{Introduction}

Let $\Phi:\mathbb{R}^d\to [0,+\infty)$ be  a differentiable, convex function such that $\Phi(0)=0$, $\Phi(y)>0$ if $y\neq 0$, $\Phi(-y)=\Phi(y)$,
 and
\begin{equation}\label{eq:N-sub-inf}
\lim_{|y|\to\infty}\frac{\Phi(y)}{|y|}=+\infty,
\end{equation}
where $|\cdot|$ denotes the euclidean norm on $\rr^d$. From now on, we say that $\Phi$ is an $N_{\infty}$ function if $\Phi$ satisfies the previous properties.

For $T>0$, we assume that  $F:[0,T]\times\rr^d\to\rr^d$  ($F=F(t,x)$)
  is a differentiable function  with respect to $x$ for a.e. $t\in [0,T]$. Additionally, suppose that $F$ satisfies the following conditions:
\begin{enumerate}
\labitem{(C)}{item:condicion_c} $F$ and its gradient $\nabla_x F$, with respect to $x\in\rr^d$,  are  Carath\'eodory functions, i.e. they are measurable functions with respect to $t\in [0,T]$, for every  $x\in\rr^d$, and they are continuous functions with  respect to  $x\in\rr^d$ for a.e. $t \in [0,T]$.
 \labitem{(A)}{item:condicion_a}  For   a.e. $t\in [0,T]$, it holds that
\begin{equation}\label{eq:phi-lagrange}
|F(t,x)| + |\nabla_x F(t,x)|  \leq a(x)b(t),
\end{equation}
where  $a\in C\left(\rr^d,[0,+\infty)\right)$ and $0\leq b\in L^1([0,T],\rr)$.
\end{enumerate}

The goal of this paper is to obtain existence of solutions for  the following problem:

\begin{equation}\label{eq:ProbPhiLapla}
    \left\{%
\begin{array}{ll}
  \frac{d}{dt} \nabla \Phi(u'(t))= \nabla_{x}F(t,u(t)), \quad \hbox{for a.e. } t \in (0,T),\\
    u(0)-u(T)=u'(0)-u'(T)=0.
\end{array}%
\right. \tag{${P_{\Phi}}$}
\end{equation}

Our approach involves the direct method of the calculus of variations in the framework of \emph{anisotropic Orlicz-Sobolev spaces}. 
We suggest the article  \cite{Orliczvectorial2005} for definitions and main results on anisotropic Orlicz spaces. These spaces allow us to unify and extend previous results on existence of solutions for systems like \eqref{eq:ProbPhiLapla}.
 We will find solutions of \eqref{eq:ProbPhiLapla} by finding extreme points of the \emph{action integral} 
\begin{equation}\label{eq:integral_accion}
  I(u):=\int_{0}^T \Phi(u'(t))+ F(t,u(t))\ dt.\tag{$IA$}
\end{equation}

In what follows, we shall denote by $\mathcal{L}=\mathcal{L}_{\Phi,F}$ the function $\Phi(y)+F(t,x)$, and we will call it \emph{Lagrangian}. 

The classic book  \cite{mawhin2010critical} deals mainly with problem \eqref{eq:ProbPhiLapla} with $\Phi(x)=\Phi_2(x):=|x|^2/2$, through various methods: direct, dual, saddle points,  minimax, etc. The results in \cite{mawhin2010critical} were extended and improved in several articles,  see  \cite{tang1998periodic,tang2001periodic,tang1995periodic,wu1999periodic,zhao2004periodic}  to cite some examples. The case $\Phi(x)=\Phi_p(y):=|y|^p/p$, for arbitrary $1<p<\infty$ were considered in  \cite{tang2010periodic,Tian2007192}, among other papers. In this case, \eqref{eq:ProbPhiLapla} is reduced to the \emph{$p$-laplacian system}.  If $\Phi_{p_1,p_2}:\rr^d\times \rr^d\to [0,+\infty)$  is defined by
\begin{equation}\label{eq:phip_1p_2}\Phi_{p_1,p_2}(y_1,y_2):=\frac{|y_1|^{p_1}}{p_1}+\frac{|y_2|^{p_2}}{p_2},\end{equation}
then \eqref{eq:ProbPhiLapla} becomes  $(p_1,p_2)$-laplacian system, see
\cite{li2014periodic,pasca2010periodic,pacsca2010some,pasca2011some,pasca2016periodic,yang2012periodic,yang2013existence}.  In a previous paper (see \cite{ABGMS2015}), we obtained similar results in an isotropic Orlicz framework. Hence \eqref{eq:ProbPhiLapla}  contains several problems that have been considered by many authors in the past. Our results still improve some results on $(p_1,p_2)$-laplacian systems since we obtain existence  of solutions for them under  less restrictive conditions.
For all this, we believe that anisotropic Sobolev-Orlicz spaces can provide a suitable framework to  unify many known results. On the other hand, we point out that one of the most important aspects in our work  is the possibility of dealing with  functions $\Phi$ that grow faster than power functions. 

\begin{example}\label{ex:RandEx} As an illustrative example, we obtain existence of solutions for
\begin{equation}\label{eq:RandExample}
    \left\{%
\begin{array}{ll}
  \frac{d}{dt} \left[u_1(t)e^{(u_1'(t))^2+(u_2'(t))^2}\right]= F_{x_1}(t,u(t)), \quad \hbox{for a.e. } t \in (0,T),\\
   \frac{d}{dt} \left[u_2(t)e^{(u_1'(t))^2+(u_2'(t))^2}\right]= F_{x_2}(t,u(t)), \quad \hbox{for a.e. } t \in (0,T),\\
    u(0)-u(T)=u'(0)-u'(T)=0,
\end{array}%
\right. 
\end{equation}
where  $F(t,x)=P(t)Q(x)$, with $P$ and $Q$ polynomials  (see Remark \ref{com:ejemplo} below).
\end{example}

The paper is organized as follows. In Section \ref{sec:preliminares},  we summarize some known results about Orlicz and Orlicz-Sobolev spaces. In order to obtain existence of minimizers of action integrals it is necessary that the functional $I$ be coercive. 
In the past, several conditions on $F$ have been useful to obtain coercivity of $I$ for the functions  $\Phi_p$ and $\Phi_{p_1,p_2}$ . 
In this paper  we investigate the condition that in the literature was called sublinearity 
(see \cite{tang1998periodic,wu1999periodic,zhao2004periodic} for the laplacian, \cite{li2015infinitely,tang2010periodic} for the $p$-laplacian and \cite{li2014periodic,pasca2010periodic,pacsca2010some,yang2013existence} for $(p_1,p_2)$-laplacian). In Section \ref{sec:existencia},  we contextualize the sublinearity within our framework (see \eqref{eq:cond-sub} below) and we  establish results of existence of minimizers of \eqref{eq:integral_accion}  in Theorem \ref{coercitividad-r}.   In  Section \ref{sec:diferenciabilidad},  we establish conditions under which a minimum of \eqref{eq:integral_accion}  is a solution of \eqref{eq:ProbPhiLapla}.

\section{Anisotropic Orlicz and Orlicz-Sobolev spaces}\label{sec:preliminares}

In this section, we give a short introduction to  Orlicz and Orlicz-Sobolev spaces of vector valued functions associated to anisotropic $N_{\infty}$ functions $\Phi:\rr^d\to[0,+\infty)$.  References for  these topics are \cite{chamra2017anisotropic,cianchi2000fully,cianchi2004optimal,Desch2001,gwiazda2013anisotropic,Orliczvectorial2005,Skaff1969,trudinger1974imbedding}. For the theory of convex functions in general we suggest \cite{clarke2013functional}. 
Note that, unlike in \cite{gwiazda2013anisotropic}, we do not require that $N_{\infty}$ functions be superlinear near  0, i.e. $\Phi(x)/|x|\to 0$ when $|x|\to 0$. However, most of the results proved in \cite{gwiazda2013anisotropic} do not depend on this property. 

If $\Phi(y)$ is an $N_{\infty}$  function which depends on $|y|$ ($\Phi(y)=\overline{\Phi}(|y|)$), then we say that $\Phi$ is \emph{radial}. 

We can use the following example to obtain new $N_{\infty}$ functions from given $N_{\infty}$ ones.

    \begin{example}
      Let  $(d_1,\ldots,d_k)\in\mathbb{Z}^k_+$. Suppose that  $\Phi_j:\rr^{d_j}\to [0,+\infty)$, $j=1,\ldots k$,  are $N_{\infty}$ functions and  $O_j\in L(\rr^{d},\rr^{d_j})$ are bounded linear functions  satisfying $\bigcap_{j=1}^k\ker O_j=\{0\}$. Then 
      
      \[\Phi(y):=\sum_{j=1}^k\Phi_j(O_jy)\]
      is an $N_{\infty}$ function. 
      
      Let us briefly show that $\Phi$ satisfies \eqref{eq:N-sub-inf}. Suppose that $|y_n|\to\infty$ and $\Phi(y_n)/|y_n|$ is bounded. If for some $j=1,\ldots,k$ there exist  $\epsilon>0$ and a subsequence $n_s$ such that $|O_jy_{n_s}|\geq \epsilon |y_{n_s}|$, then $\Phi_j(O_jy_{n_s})/|y_{n_s}|\to\infty$, contrary to our assumption. Hence $O_jy_n/|y_n|\to 0$ when $n\to\infty$. Passing to a subsequence, we can assume that there exists $y\in\rr^d$ such that  $y_n/|y_n|\to y$. Then $y\in\ker O_j$ and $y\neq 0$, which is a contradiction.
      
     As a consequence, the function $\Phi:\rr^d\times\rr^d\to [0,+\infty)$ defined by
      \[\Phi(y_1,y_2)=e^{|y_1-y_2|}-1+|y_2|^p,\]
      with $1<p<\infty$ is an $N_{\infty}$ function.

    \end{example}

Associated to $\Phi$ we have the \emph{complementary function} $\Phi^{\star}$ which is defined at $\zeta\in\rr^d$ as
\begin{equation}\label{eq:conjugada}
 \Phi^{\star}(\zeta)=\sup\limits_{y\in\mathbb{R}^d} y\cdot \zeta-\Phi(y).
\end{equation}
From the continuity of $\Phi$ and  \eqref{eq:N-sub-inf}, we also have that $\Phi^{\star}:\rr^d \to [0,\infty)$. 
The complementary function $\Phi^{\star}$ is an  $N_{\infty}$ function (see \cite[Ch. 2]{mawhin2010critical} and \cite[Thm. 2.2]{Orliczvectorial2005}).  Now, Moreau Theorem (see \cite[Thm. 4.21]{clarke2013functional}) implies that $\Phi^{\star\star}=\Phi$.

 Some useful properties which are satisfied by $N_{\infty}$ functions are:

\begin{enumerate}
 \item[\namedlabel{eq:escalar_ine}{(P1)}]   $\Phi(\lambda x)\leq \lambda\Phi(x)$, for every  $\lambda\in[0,1],x\in\rr^d$;
 \item[\namedlabel{eq:escalar_ine_2}{(P2)}] if $ 0<|\lambda_1|\leq |\lambda_2| $, then $\Phi(\lambda_1 x)\leq\Phi(\lambda_2 x);$
  \item[\namedlabel{eq:young-ine}{(P3)}] $x\cdot y\leq \Phi(x)+\Phi^{\star}(y)$;
  \item[\namedlabel{eq:young-id}{(P4)}] $x\cdot \nabla\Phi(x)= \Phi(x)+\Phi^{\star}(\nabla \Phi (x))$.

\end{enumerate}

We say that  $\Phi:\mathbb{R}^d\rightarrow [0,+\infty)$ satisfies the  \emph{$\Delta_2$-condition} and we denote  $\Phi \in \Delta_2$,
if there exists a  constant $C>0$  such that
\begin{equation}\label{delta2defi}\Phi(2x)\leq C \Phi(x)+1,\quad x\in\rr^d.
\end{equation}

Throughout this article, we denote by $C=C(\lambda_1,\ldots,\lambda_n)$ a positive constant that may depend on $T$, $\Phi$ (or another $N_{\infty}$ functions) and the parameters $\lambda_1,\ldots,\lambda_n$ . We assume that the value that $C$ represents may change in 
different occurrences in the same chain of inequalities.

If $\Phi$ satisfies the $\Delta_2$-condition, then $\Phi$ satisfies the following properties:
\begin{enumerate}
 \item[\namedlabel{eq:quasi-sub-aditividad}{(P5)}] There exists $C>0$ such that for every $x,y\in \rr^d$, $\Phi(x+y)\leq C (\Phi(x)+\Phi(y))+1.$
\item[\namedlabel{{eq:delta2-lambda}}{(P6)}]  For any $\lambda>1$ there exists $C(\lambda)>0$ such that $\Phi(\lambda x)\leq C(\lambda)\Phi(x)+1.$
\item [\namedlabel{{eq:delta2-power}}{(P7)}] There exist $1<p<\infty$ and $C>0$ such that $\Phi(x)\leq C|x|^p+1$.

\end{enumerate}

Let $\Phi_1$ and $\Phi_2$ be   $N_{\infty}$ functions. Following  \cite{trudinger1974imbedding}, we write $\Phi_1\strictif\Phi_2$ if there exist  $k,C>0$ such that
\begin{equation}\label{eq:orden} \Phi_1(x)\leq C+\Phi_2(kx),\quad x\in\rr^d.\end{equation}
For example, if $\Phi \in \Delta_2$ then there exists $p\in (1,+\infty)$ such that $\Phi\strictif |x|^p$.  If for every $k>0$ there exists $C=C(k)>0$ such that \eqref{eq:orden} holds, we write  $\Phi_1\llcurly\Phi_2$.  We observe that  $\Phi_1\strictif \Phi_2$ implies that $\Phi^{\star}_2\strictif\Phi^{\star}_1$. A similar assertion holds for relation $\llcurly$.

If $\Phi^{\star}\in\Delta_2$ then $\Phi$ satisfies the \emph{$\nabla_2$-condition}, i.e.  for every $0<r<1$ there exist $l=l(r)>0$ and $C'=C'(r)>0$ such that 
\begin{equation}\label{eq:nabla2}
  \Phi(x)\leq \frac{r}{l}\Phi(l x)+C',\quad x\in\rr^d.
\end{equation}

 We denote by $\mathcal{M}:=\mathcal{M}\left([0,T],\rr^d\right)$, with $d\geq 1$,  the set of all measurable functions (i.e. functions which are limits of simple functions)  defined on $[0,T]$ with values on $\mathbb{R}^d$ and  we write $u=(u_1,\dots,u_d)$ for  $u\in \mathcal{M}$.

 Given  an $N_{\infty}$ function $\Phi$ we define the \emph{modular function} 
$\rho_{\Phi}:\mathcal{M}\to \mathbb{R}^+\cup\{+\infty\}$ by
\[\rho_{\Phi}(u):= \int_0^T \Phi(u)\ dt.\]

Now, we introduce the \emph{Orlicz class} $C^{\Phi}=C^{\Phi}\left([0,T],\rr^d\right)$   by setting
\begin{equation}\label{claseOrlicz}
  C^{\Phi}:=\left\{u\in \mathcal{M} | \rho_{\Phi}(u)< \infty \right\}.
\end{equation}
The \emph{Orlicz space} $\lphi=L^{\Phi}\left([0,T],\rr^d\right)$ is the linear hull of $\claseor$;
equivalently,
\begin{equation}\label{espacioOrlicz}
\lphi:=\left\{ u\in \mathcal{M}| \exists \lambda>0: \rho_{\Phi}(\lambda u) < \infty   \right\}.
\end{equation}
  The Orlicz space $\lphi$ equipped with the \emph{Luxemburg norm}
\[
\|  u  \orlnor:=\inf \left\{ \lambda\bigg| \rho_{\Phi}\left(\frac{v}{\lambda}\right) dt\leq 1\right\},
\]
is a Banach space.

The subspace $\ephi=\ephi\left([0,T],\rr^d\right)$ is defined as the closure in $\lphi$ of the subspace $L^{\infty}\left([0,T],\rr^d\right)$ of all $\mathbb{R}^d$-valued essentially bounded functions. The equality $\lphi=\ephi$ is true if and only if $\Phi\in\Delta_2$ (see \cite[Cor. 5.1]{Orliczvectorial2005}). 

A generalized version of \emph{H\"older's inequality} holds in Orlicz spaces (see \cite[Thm. 7.2]{Orliczvectorial2005}). Namely, if $u\in\lphi$ and $v\in\lpsi$ then $u\cdot v\in L^1$ and
\begin{equation}\label{holder}
\int_0^Tv\cdot u\ dt\leq 2 \|u\orlnor\|v\|_{L^{\Phi^{\star}}}.
\end{equation}
By $u\cdot v$ we denote the usual dot product in $\mathbb{R}^{d}$ between $u$ and $v$.

We consider the subset $\Pi(\ephi,r)$ of $\lphi$ given by
\[\Pi(\ephi,r):=\{u\in\lphi| d(u,\ephi)<r\}=\{u\in\lphi| d(u,L^{\infty})<r\}.\]
This set is related to the Orlicz class $\claseor$ by the following inclusions
\begin{equation}\label{eq:inclusiones}\Pi(\ephi, r )\subset r \claseor\subset\overline{\Pi(\ephi,r)}
\end{equation}
for any positive $r$. This relation is a trivial generalization of  \cite[Thm. 5.6]{Orliczvectorial2005}.
If $\Phi \in \Delta_2$,  then the sets $\lphi$, $\ephi$, $\Pi(\ephi,r)$ and $\claseor$ are equal.
 
As usual, if $(X,\|\cdot\|_X)$ is a normed space and $(Y,\|\cdot \|_Y)$ is a linear subspace of $X$,  we write $Y\hookrightarrow X$ and we say that $Y$ is \emph{embedded} in $X$  when there exists $C>0$ such that
$\|y\|_X\leq C\|y\|_Y$ for any $y\in Y$.  With this notation, H\"older's inequality states that  $\lphi\hookrightarrow  \left[\lpsi\right]^\star$, where a function $v\in\lphi$ is associated  to $\xi_v\in \left[\lpsi\right]^\star$ given by
\begin{equation}\label{pairing}
  \langle \xi_v,u\rangle=\int_0^Tv\cdot u\ dt.
\end{equation}

It is easy to prove that $L^{\infty} \hookrightarrow L^\Phi\hookrightarrow L^1$ for any $N_{\infty}$ function $\Phi$.

Suppose $u\in\lphi([0,T],\rr^d)$ and consider $K:=\rho_{\Phi}(u)+1\geq 1$. Then, from \ref{eq:escalar_ine} we have $\rho_{\Phi}(K^{-1}u)\leq K^{-1}\rho_{\Phi}(u)\leq 1$. Therefore, we conclude 
\begin{equation}\label{eq:amemiya}
 \|u\orlnor \leq \rho_{\Phi}(u)+1.
\end{equation}

%We highlight the following result (see \cite[Thm. 3.3]{gwiazda2013anisotropic}).

%\begin{prop} $\lphi\left([0,T],\rr^d\right)=\left[\epsi\left([0,T],\rr^d\right)\right]^{\star}$.
 
%\end{prop}

%As a consequence of previous proposition,  
We highlight that $\lphi\left([0,T],\rr^d\right)$ can be equipped with the weak$\star$ topology induced by $\epsi\left([0,T],\rr^d\right)$ because 
$\lphi\left([0,T],\rr^d\right)=\left[\epsi\left([0,T],\rr^d\right)\right]^{\star}$ (see \cite[Thm. 3.3]{gwiazda2013anisotropic}).

We define the \emph{Sobolev-Orlicz space} $\wphi\left([0,T],\rr^d\right)$ by
\[\wphi\left([0,T],\rr^d\right):=\left\{u| u\in AC\left([0,T],\rr^d\right) \hbox{ and } u'\in \lphi\left([0,T],\rr^d\right)\right\},\]
where $AC\left([0,T],\rr^d\right)$ denotes the space of all $\rr^d$ valued absolutely continuous functions defined on $[0,T]$. The space $\wphi\left([0,T],\rr^d\right)$ is a Banach space when equipped with the norm
\begin{equation}\label{def-norma-orlicz-sob}
\|  u  \|_{\wphi}= \|  u  \|_{\lphi} + \|u'\orlnor.
\end{equation}

%We define 
Let the function $A_{\Phi}:\rr^d\to [0,+\infty)$ be the greatest convex radial minorant of $\Phi$, i.e.
\begin{equation}\label{eq:inversa-gral}
A_{\Phi}(x)=\sup\left\{\Psi(x) \right\},
\end{equation}
where the supremum is taken over all the convex, non negative, radial functions $\Psi$ with $\Psi(x)\leq \Phi(x)$.

\begin{prop}\label{prop:AsubPhi}  $A_{\Phi}$ is a radial and $N_{\infty}$ function.
\end{prop}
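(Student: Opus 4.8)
The plan is to verify directly that the supremum defining $A_\Phi$ in \eqref{eq:inversa-gral} is attained by a function that is itself convex, radial, nonnegative, and satisfies the $N_\infty$ growth condition \eqref{eq:N-sub-inf}; in other words, that $A_\Phi$ belongs to the class over which the supremum is taken and hence is the largest such function. First I would note that the family of admissible $\Psi$ is nonempty: the zero function is convex, radial, nonnegative, and lies below $\Phi$ (since $\Phi\geq 0$), so $A_\Phi$ is well defined and $A_\Phi\geq 0$. Next, a pointwise supremum of convex functions is convex, and a pointwise supremum of radial functions is radial (if each $\Psi$ depends only on $|x|$, so does $\sup_\Psi \Psi$); also $A_\Phi\leq \Phi$ because each competitor satisfies $\Psi\leq\Phi$. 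Thus $A_\Phi$ is a convex, radial, nonnegative function with $A_\Phi\leq\Phi$, so it is in fact the maximum element of the admissible family. It remains to check $A_\Phi(0)=0$, that $A_\Phi(x)>0$ for $x\neq 0$, and the superlinear growth \eqref{eq:N-sub-inf}; the symmetry $A_\Phi(-x)=A_\Phi(x)$ and differentiability issues are not needed, since an $N_\infty$ function in the sense of this paper need only be convex (continuity then follows) — though I should double-check whether the paper's definition of $N_\infty$ requires differentiability, and if so argue that a finite convex radial function with the stated properties can be taken, or simply note convexity plus finiteness suffices for the uses made later. Since $0\leq A_\Phi\leq\Phi$ and $\Phi(0)=0$, we get $A_\Phi(0)=0$.

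For the growth condition \eqref{eq:N-sub-inf}, the key point is to produce, for each $M>0$, a convex radial minorant of $\Phi$ that eventually dominates $M|x|$; taking the supremum over $M$ then forces $A_\Phi(x)/|x|\to\infty$. The natural candidate is $\Psi_M(x) := \big(M|x| - c_M\big)_+$ for a suitable constant $c_M\geq 0$, which is convex (maximum of the affine-in-$|x|$ function $M|x|-c_M$ and $0$, and $|x|\mapsto M|x|-c_M$ is convex), radial, and nonnegative. I must choose $c_M$ so that $\Psi_M\leq\Phi$ everywhere. Using \eqref{eq:N-sub-inf} for $\Phi$ itself, pick $R_M$ with $\Phi(y)\geq M|y|$ for $|y|\geq R_M$; for $|y|\geq R_M$ we then have $\Psi_M(y)\leq M|y|\leq\Phi(y)$ regardless of $c_M\geq 0$, and for $|y|\leq R_M$ we need $M|y|-c_M\leq\Phi(y)$, which holds as soon as $c_M\geq M R_M$ (since the left side is at most $MR_M - c_M \le 0 \le \Phi(y)$). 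So $c_M := MR_M$ works, $\Psi_M$ is an admissible competitor, and hence $A_\Phi(x)\geq\Psi_M(x) = M|x| - MR_M$ for all $x$. Therefore $\liminf_{|x|\to\infty} A_\Phi(x)/|x|\geq M$ for every $M$, giving \eqref{eq:N-sub-inf}.

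Finally, $A_\Phi(x)>0$ for $x\neq 0$ follows from the same competitors: fix $x_0\neq 0$ and choose $M$ large enough and then $c_M = MR_M$; actually a cleaner route is to use convexity and $A_\Phi(0)=0$ together with the superlinear growth just established — if $A_\Phi(x_0)=0$ for some $x_0\neq0$, convexity along the ray through $x_0$ would force $A_\Phi(t x_0)=0$ for all $t\in[0,1]$ and, by convexity and $A_\Phi\ge 0$, that $A_\Phi(tx_0) \le$ something sublinear is contradicted once $t$ is large, since $A_\Phi(t x_0)/|t x_0|\to\infty$; I would instead simply exhibit a competitor that is positive at $x_0$, e.g. a small multiple of one of the $\Psi_M$ above, or invoke that $\Phi(x_0)>0$ and that there is a supporting affine minorant of the convex function $r\mapsto\overline\Phi(r)$ at $r=|x_0|$ which is positive near $|x_0|$ — but such a minorant need not be nonnegative, so truncating at $0$ and checking it stays below $\Phi$ is the careful way. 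I expect the main obstacle to be precisely this bookkeeping: confirming that the truncated affine-radial competitors remain $\leq\Phi$ globally (not just where they are active) and, depending on the paper's precise definition of $N_\infty$ function, deciding whether differentiability of $A_\Phi$ must be addressed; if it must, one replaces the $\Psi_M$ by smoothed versions, or argues that $A_\Phi$, being a finite convex function, is at least continuous and that this is all that is used downstream.
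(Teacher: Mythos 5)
Your treatment of convexity, radiality, nonnegativity, $A_{\Phi}(0)=0$ and the superlinear growth \eqref{eq:N-sub-inf} is correct and is essentially the paper's own argument: both proofs feed the truncated affine radial competitors $\bigl(M|x|-MR_M\bigr)^+$ into the supremum, where $R_M$ comes from applying \eqref{eq:N-sub-inf} to $\Phi$ itself. The gap is in the positivity claim $A_{\Phi}(x_0)>0$ for $x_0\neq 0$: none of the three routes you sketch is carried through, and two of them fail outright. First, ``choose $M$ large enough'' goes the wrong way, since $\Psi_M(x_0)=\bigl(M|x_0|-MR_M\bigr)^+$ vanishes unless $R_M<|x_0|$ and large $M$ forces $R_M$ large; what you actually need is that for every $r>0$ there exists $k(r)>0$ with $\Phi(x)\geq k(r)|x|$ for all $|x|\geq r$, a statement you never establish. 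Second, convexity plus $A_{\Phi}(0)=0$ plus superlinear growth does \emph{not} imply positivity away from the origin: $\bigl((|x|-1)^+\bigr)^2$ is convex, radial, superlinear and vanishes on the whole unit ball, so this is precisely the degenerate behaviour that must be excluded and your ``cleaner route'' cannot exclude it. Third, a supporting affine minorant of $r\mapsto\overline{\Phi}(r)$ presupposes that $\Phi$ is radial, which is not assumed.

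The missing ingredient, which is the content of the second paragraph of the paper's proof, is the following: for each $r>0$ the continuous function $\Phi$ attains a positive minimum $m(r)$ on the compact sphere $|x|=r$, and convexity with $\Phi(0)=0$ gives $\Phi(\lambda z)\geq\lambda\Phi(z)$ for $\lambda\geq 1$ (the reverse of \ref{eq:escalar_ine}), hence $\Phi(x)\geq \frac{m(r)}{r}|x|$ for all $|x|\geq r$. Then $k(r)\bigl(|x|-r\bigr)^+$ with $k(r)=m(r)/r$ is an admissible competitor that is positive at every $x_0$ with $|x_0|>r$, and letting $r\to 0$ yields $A_{\Phi}>0$ off the origin. (Your observation that differentiability of $A_{\Phi}$ need not be addressed is fair; the paper does not address it either and only uses the inverse of the one-variable function $A_{\Phi}$ downstream.)
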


\begin{proof} The convexity and radiality of $A_{\Phi}$ is a consequence of the fact that the supremum preserves these properties. Then, it is only necessary to show that $A_{\Phi}(x)>0$ when $x\neq 0$, and  $A_{\Phi}(x)/|x|\to\infty$, when $|x|\to\infty$. We write, for $r\in\rr$, $r^+=\max\{r,0\}$. Since $\Phi$ is an $N_{\infty}$ function,  for every $k>0$ there exists $r_0>0$ such that  $\Phi(x)\geq k(|x|-r_0)^+$, for $|x|>r_0$.  As $ k(|x|-r_0)^+$ is a non negative, radial, convex function, it follows that $A_{\Phi}(x)\geq k(|x|-r_0)^+$. Therefore $\liminf_{|x|\to\infty} A_{\Phi}(x)/|x|\geq k$ and consequently   $\lim_{|x|\to\infty} A_{\Phi}(x)/|x|=\infty$.

As $\Phi$ is an
$N_{\infty}$ continuous function, for every $r>0$ there exists $k(r)>0$ such that $\Phi(x)\geq k(r)|x|\geq k(r)(|x|-r)^+$, when $|x|\geq r$. This fact implies that $A_{\Phi}(x)>0$ for $x\neq 0$.
\end{proof}
By abuse of notation, we identify $A_{\Phi}$ with a function defined on $[0,+\infty)$.  This function is invertible.
\begin{cor}\label{cor:incr_aphi} $\lphi([0,T],\rr^d) \hookrightarrow L^{A_{\Phi}}([0,T],\rr^d)$.

\end{cor}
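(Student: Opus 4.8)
The plan is to observe that the inequality $A_{\Phi}(x)\leq \Phi(x)$ holds for every $x\in\rr^d$ directly from the definition \eqref{eq:inversa-gral}: every convex, non-negative, radial function $\Psi$ competing in the supremum satisfies $\Psi(x)\leq\Phi(x)$, hence so does their supremum $A_{\Phi}$. Consequently $\rho_{A_{\Phi}}(u)\leq \rho_{\Phi}(u)$ for every $u\in\mathcal{M}$. This immediately yields the set inclusion $\lphi\subseteq L^{A_{\Phi}}([0,T],\rr^d)$ (and, in fact, $\claseor\subseteq C^{A_{\Phi}}$): if $\rho_{\Phi}(\lambda u)<\infty$ for some $\lambda>0$, then $\rho_{A_{\Phi}}(\lambda u)<\infty$ for the same $\lambda$.

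For the norm estimate I would use the definition of the Luxemburg norm. Fix $u\in\lphi$ and let $\lambda>0$ be any number with $\rho_{\Phi}(u/\lambda)\leq 1$. Then $\rho_{A_{\Phi}}(u/\lambda)\leq\rho_{\Phi}(u/\lambda)\leq 1$, so $\lambda$ is admissible in the infimum defining $\|u\|_{L^{A_{\Phi}}}$. Taking the infimum over all such $\lambda$ gives $\|u\|_{L^{A_{\Phi}}}\leq\|u\orlnor$, which is precisely the asserted embedding, with embedding constant $C=1$.

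There is no genuine obstacle here; the statement is a routine monotonicity consequence of $A_{\Phi}\leq\Phi$ together with the fact that both the modular $\rho$ and the Luxemburg norm are monotone in the generating $N_{\infty}$ function. The only input from the preceding material that is really needed is Proposition \ref{prop:AsubPhi}, which guarantees that $A_{\Phi}$ is itself an $N_{\infty}$ function, so that $L^{A_{\Phi}}([0,T],\rr^d)$ is a well-defined Orlicz space and the notation in the statement makes sense.
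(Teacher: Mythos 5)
Your proof is correct and is exactly the argument the paper intends: the corollary is stated without proof precisely because it follows from the pointwise bound $A_{\Phi}\leq\Phi$ (immediate from the definition \eqref{eq:inversa-gral}) together with monotonicity of the modular and of the Luxemburg norm, which is what you wrote. The norm estimate with constant $1$ and the appeal to Proposition \ref{prop:AsubPhi} to ensure $L^{A_{\Phi}}$ is well defined are both exactly right.
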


 As is customary, we will use the decomposition $u=\overline{u}+\widetilde{u}$ for a function $u\in L^1([0,T])$  where $\overline{u} =\frac1T\int_0^T u(t)\ dt$ and $\widetilde{u}=u-\overline{u}$.

\begin{lem}\label{lem:inclusion orlicz} Let $\Phi:\rr^d\to [0,+\infty)$ be an $N_{\infty}$
function and let \linebreak[4]$u\in\wphi\left([0,T],\rr^d\right)$. Let 
$A_{\Phi}: \rr^d \to  [0,+\infty)$ be the isotropic function defined by \eqref{eq:inversa-gral}. Then:
 
\begin{enumerate}
  \item \emph{Morrey's inequality}.   For every $s,t\in [0,T]$, $s\neq t$
  \begin{equation}
   |u(t)-u(s)| \leq
  |s-t|A_{\Phi}^{-1}\left(\frac{1}{|s-t|}\right)\|u'\orlnor.\tag{M.I}\label{in-sob-cont}
  \end{equation}

  \item \emph{Sobolev's inequality}. 
  \begin{equation}
   \|u\linf \leq A_\Phi^{-1}\left(\frac{1}{T}\right)\max\{1,T\}\|u\sobnor.\tag{S.I}\label{eq:sobolev}
  \end{equation}

  \item \emph{Poincar\'e-Wirtinger's inequality}. We have $\widetilde{u}\in L^{\infty}\left([0,T],\rr^d\right)$ and 
    \begin{equation}\label{eq:wirtinger-iso}
    \|\widetilde{u}\|_{L^{\infty}} \leq T A_{\Phi}^{-1}\left(\frac{1}{T}\right)\| u'\orlnor.\tag{P-W.I}
    \end{equation}
    
    \item\label{it:embeding} If $\Phi$ is an $N_{\infty}$ function, then the space $\wphi\left([0,T],\rr^d\right)$ is compactly embedded in the space of  continuous functions $C([0,T],\rr^d)$.
  \end{enumerate}

\end{lem}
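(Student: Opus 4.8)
The plan is to prove the four items essentially in the order stated, since each one builds on the previous. For Morrey's inequality \eqref{in-sob-cont}, I would start from the absolute continuity of $u$ to write $u(t)-u(s)=\int_s^t u'(\tau)\,d\tau$, and then estimate $|u(t)-u(s)|\le\int_s^t|u'(\tau)|\,d\tau$. By Corollary \ref{cor:incr_aphi} we have $u'\in L^{A_\Phi}$ with $\|u'\|_{L^{A_\Phi}}\le C\|u'\|_{L^\Phi}$; applying Hölder's inequality \eqref{holder} on the interval $[s,t]$ to the pair $u'\in L^{A_\Phi}$ and the constant function $\mathbf 1\in L^{A_\Phi^\star}$, and computing $\|\mathbf 1\|_{L^{A_\Phi^\star}([s,t])}$ explicitly via the definition of the Luxemburg norm (it equals $|s-t|/(A_\Phi^\star)^{-1}$-type expression, which by the standard duality $A_\Phi^{-1}(\sigma)(A_\Phi^\star)^{-1}(\sigma)\simeq\sigma$ reduces to $|s-t|A_\Phi^{-1}(1/|s-t|)$), gives the claim. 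Actually a cleaner route avoids Hölder: one uses $|u'|\le A_\Phi^{-1}(A_\Phi(u'))$ pointwise together with Jensen's inequality for the convex function $A_\Phi^{-1}$... no — $A_\Phi^{-1}$ is concave, so Jensen goes the right way. Write $\frac1{|s-t|}\int_s^t|u'|\le A_\Phi^{-1}\!\left(\frac1{|s-t|}\int_s^t A_\Phi\!\left(\frac{u'}{\lambda}\right)\lambda\,d\tau\right)$ after normalizing by $\lambda=\|u'\|_{L^\Phi}$ (using $A_\Phi(u'/\lambda)\le\Phi(u'/\lambda)$ and $\int_0^T\Phi(u'/\lambda)\le1$); this yields $|u(t)-u(s)|\le\lambda|s-t|A_\Phi^{-1}(1/|s-t|)$, which is exactly \eqref{in-sob-cont}. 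I would present this Jensen-based argument as the main one.

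For the Poincaré–Wirtinger inequality \eqref{eq:wirtinger-iso}, I would use that $\int_0^T\widetilde u\,dt=0$, so for each $t$ there is a point $s=s(t)$ with $\widetilde u(s)=0$ (by the mean value theorem applied componentwise, or by continuity and the vanishing integral); then $|\widetilde u(t)|=|\widetilde u(t)-\widetilde u(s)|\le|t-s|A_\Phi^{-1}(1/|t-s|)\|u'\|_{L^\Phi}$ by item 1, and since $r\mapsto rA_\Phi^{-1}(1/r)$ is nondecreasing on $(0,T]$ (a consequence of convexity of $A_\Phi$), this is bounded by $T A_\Phi^{-1}(1/T)\|u'\|_{L^\Phi}$. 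Note $\widetilde u'=u'$, so the right-hand side is as stated. Sobolev's inequality \eqref{eq:sobolev} then follows by writing $u=\overline u+\widetilde u$: we bound $|\overline u|\le\frac1T\int_0^T|u|\,dt\le\frac1T\|u\|_{L^1}\le C\|u\|_{L^\Phi}$, and more precisely $|\overline u|\le A_\Phi^{-1}(1/T)\|u\|_{L^\Phi}$ by the same Jensen trick applied to $u$ itself on $[0,T]$; combining with \eqref{eq:wirtinger-iso} and collecting the factor $\max\{1,T\}$ in front of $\|u\|_{L^\Phi}+\|u'\|_{L^\Phi}=\|u\|_{W^1L^\Phi}$ gives the result.

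For item \ref{it:embeding}, the embedding $\wphi\hookrightarrow C([0,T],\rr^d)$ is continuous by \eqref{eq:sobolev}. For compactness I would invoke the Arzelà–Ascoli theorem: given a bounded sequence $\{u_n\}$ in $\wphi$, \eqref{eq:sobolev} gives uniform boundedness, and Morrey's inequality \eqref{in-sob-cont} gives equicontinuity, because $\|u_n'\|_{L^\Phi}$ is bounded and $|s-t|A_\Phi^{-1}(1/|s-t|)\to0$ as $|s-t|\to0$ — this last limit is precisely the superlinearity $A_\Phi(x)/|x|\to\infty$ (equivalently $A_\Phi^{-1}(\sigma)/\sigma\to0$ as $\sigma\to\infty$) established in Proposition \ref{prop:AsubPhi}. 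Hence a uniformly convergent subsequence exists. The main obstacle, and the place to be careful, is the computation of the Luxemburg norm of constants / the rigorous justification of the Jensen step with the correct normalizing constant $\lambda$, and—more substantively—verifying the monotonicity of $r\mapsto rA_\Phi^{-1}(1/r)$ and the vanishing of this quantity at $0$, since everything in items 3 and 4 rests on these two elementary but essential facts about $A_\Phi$.
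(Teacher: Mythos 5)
Your treatment of items 1, 2 and 4 is sound and, unlike the paper's one-line proof (which simply reduces to the isotropic case via Corollary \ref{cor:incr_aphi} and quotes the corresponding radial statements from \cite{ABGMS2015}), it is self-contained: the Jensen step with the concave increasing function $A_{\Phi}^{-1}$, applied after normalizing by $\lambda=\|u'\orlnor$ so that $\int_0^T A_{\Phi}(u'/\lambda)\,dt\leq\int_0^T\Phi(u'/\lambda)\,dt\leq 1$, is exactly the mechanism behind the cited isotropic lemma, and your verification that $r\mapsto rA_{\Phi}^{-1}(1/r)$ is nondecreasing and tends to $0$ as $r\to 0^{+}$ (from convexity of $A_{\Phi}$ and Proposition \ref{prop:AsubPhi}) is precisely what items 3 and 4 require. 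One cosmetic slip: in your displayed Jensen inequality the factor $\lambda$ has drifted inside the argument of $A_{\Phi}^{-1}$; the correct intermediate statement is $\frac{1}{|s-t|}\int_s^t|u'|/\lambda\,d\tau\leq A_{\Phi}^{-1}\bigl(\frac{1}{|s-t|}\int_s^t A_{\Phi}(u'/\lambda)\,d\tau\bigr)$, after which one multiplies by $\lambda|s-t|$.

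The one step that genuinely fails is the starting point of your proof of \eqref{eq:wirtinger-iso}: for $d\geq 2$ the condition $\int_0^T\widetilde{u}\,dt=0$ does \emph{not} produce a point $s$ with $\widetilde{u}(s)=0$. The mean value theorem only gives a separate zero $s_i$ for each component $\widetilde{u}_i$, and a continuous vector field with vanishing mean need not vanish anywhere (take $\widetilde{u}(t)=(\cos(2\pi t/T),\sin(2\pi t/T))$). Arguing componentwise salvages the bound only up to a factor $\sqrt{d}$, which is not the constant claimed in \eqref{eq:wirtinger-iso}, and this defect would propagate into your derivation of \eqref{eq:sobolev}. The standard repair is to average over $s$ rather than choose one: write $\widetilde{u}(t)=\frac1T\int_0^T(u(t)-u(s))\,ds$, so that $|\widetilde{u}(t)|\leq\frac1T\int_0^T|u(t)-u(s)|\,ds$, and then apply \eqref{in-sob-cont} together with the monotonicity of $r\mapsto rA_{\Phi}^{-1}(1/r)$ under the integral sign; this is the same averaging device the paper itself uses in the anisotropic Lemma \ref{lem:inclusion orliczII}. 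With that correction the proposal is complete.
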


\begin{proof}  It is an immediate consequence of Corollary \ref{cor:incr_aphi} and \cite[Lemma 2.1, Cor. 2.2]{ABGMS2015}.
\end{proof}

Lemma \ref{lem:inclusion orlicz} gives us estimates for isotropic norms of $u$. In these type of inequalities some information is lost.  The following result gives us an estimate that takes into account the anisotropic nature of the space $\wphi\left([0,T],\rr^d\right)$. The proof is similar to  that of  \cite[Thm. 4.5]{chamra2017anisotropic}.

\begin{lem}[Anisotropic Poincar\'e-Wirtinger's inequality]\label{lem:inclusion orliczII} Let $\Phi:\rr^d\to [0,+\infty)$ be an $N_{\infty}$
function and let $u\in\wphi\left([0,T],\rr^d\right)$. Then

\begin{equation}\label{eq:wirtinger}
  \Phi\left(\tilde{u}(t)\right)\leq\frac{1}{T} \int_0^T \Phi\left(Tu'(r)\right)dr.\tag{A.P-W.I}
\end{equation}

\end{lem}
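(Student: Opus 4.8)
The plan is to establish the pointwise bound $\Phi(\tilde u(t)) \le \frac1T\int_0^T \Phi(Tu'(r))\,dr$ by writing $\tilde u(t)$ as an average of differences and then applying Jensen's inequality for the convex function $\Phi$. First I would recall that $\overline{u} = \frac1T\int_0^T u(r)\,dr$, so that for any fixed $t \in [0,T]$,
\[
\tilde u(t) = u(t) - \overline u = \frac1T\int_0^T \bigl(u(t) - u(r)\bigr)\,dr.
\]
Next, since $u \in AC([0,T],\rr^d)$, for each $r$ we have $u(t) - u(r) = \int_r^t u'(s)\,ds$; the key observation is to rewrite this so that the inner integral is again an average over $[0,T]$ of something involving $Tu'$. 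One clean way is to use the representation $u(t) - u(r) = \int_0^T \chi_{[r,t]}(s)\,u'(s)\,ds$ (with the convention that for $r>t$ one picks up a sign, i.e. $\chi_{[r,t]}$ is replaced by $-\chi_{[t,r]}$), so that
\[
\tilde u(t) = \frac{1}{T^2}\int_0^T\!\!\int_0^T \varepsilon_{t}(r,s)\, T u'(s)\,ds\,dr,
\]
where $\varepsilon_t(r,s) = \operatorname{sgn}(t-r)\chi_{[\min(r,t),\max(r,t)]}(s) \in \{-1,0,1\}$. This exhibits $\tilde u(t)$ as an average of the values $\varepsilon_t(r,s)\,Tu'(s)$ against the probability measure $\frac{1}{T^2}\,ds\,dr$ on $[0,T]\times[0,T]$.

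Then I would apply Jensen's inequality: since $\Phi$ is convex and the measure $\frac{1}{T^2}ds\,dr$ is a probability measure on the square,
\[
\Phi\bigl(\tilde u(t)\bigr) \le \frac{1}{T^2}\int_0^T\!\!\int_0^T \Phi\bigl(\varepsilon_t(r,s)\,Tu'(s)\bigr)\,ds\,dr.
\]
Now I invoke the symmetry $\Phi(-y) = \Phi(y)$ together with property \ref{eq:escalar_ine_2} (or simply $\Phi(0)=0 \le \Phi(Tu'(s))$) to conclude $\Phi(\varepsilon_t(r,s)\,Tu'(s)) \le \Phi(Tu'(s))$ for all $r,s$, since $|\varepsilon_t(r,s)| \le 1$. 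Substituting and noting the integrand no longer depends on $r$, the $r$-integral contributes a factor $T$, giving
\[
\Phi\bigl(\tilde u(t)\bigr) \le \frac{1}{T^2}\cdot T\int_0^T \Phi\bigl(Tu'(s)\bigr)\,ds = \frac1T\int_0^T \Phi\bigl(Tu'(s)\bigr)\,ds,
\]
which is exactly \eqref{eq:wirtinger}.

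The main technical point to get right is the validity of Jensen's inequality in this vector-valued, possibly $+\infty$-valued setting: one should check that $s \mapsto \varepsilon_t(r,s)Tu'(s)$ is integrable (it is, since $u' \in \lphi \hookrightarrow L^1$) and that the right-hand side is finite or else the inequality is trivial — if $\int_0^T \Phi(Tu'(r))\,dr = +\infty$ there is nothing to prove. A minor alternative, avoiding the double integral, is to first bound $\Phi\!\left(\frac1T\int_0^T(u(t)-u(r))\,dr\right) \le \frac1T\int_0^T \Phi(u(t)-u(r))\,dr$ by Jensen, then bound each $\Phi(u(t)-u(r)) = \Phi\!\left(\frac1T\int_0^T T\chi_{[r,t]}(s)u'(s)\,ds\right) \le \frac1T\int_0^T \Phi(T\chi_{[r,t]}(s)u'(s))\,ds \le \frac1T\int_0^T\Phi(Tu'(s))\,ds$ by Jensen again plus the $|\varepsilon|\le 1$ estimate — but this wastes a factor and does not directly give the stated sharp constant, so the double-integral route above is preferable. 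I expect no serious obstacle; the only care needed is the bookkeeping of signs in $\varepsilon_t$ and the appeal to evenness of $\Phi$.
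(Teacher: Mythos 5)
Your proof is correct and follows essentially the same route as the paper: both represent $\tilde u(t)$ as an average of the differences $u(t)-u(r)=\int_r^t u'(s)\,ds$ and apply Jensen's inequality, the only difference being that the paper normalizes the inner integral by $|t-r|$ and then uses the monotonicity of $\rho\mapsto\Phi(\rho x)/\rho$ (property \ref{eq:escalar_ine}) to replace $|t-r|$ by $T$, whereas you average against the uniform probability measure on $[0,T]^2$ and use $\Phi(\varepsilon\, Tu'(s))\le\Phi(Tu'(s))$ for $|\varepsilon|\le 1$. One small correction: the ``minor alternative'' you dismiss at the end loses nothing and is essentially the argument in the paper, since $\Phi(T\chi_{[r,t]}(s)u'(s))\le\Phi(Tu'(s))$ pointwise already gives the bound $\frac1T\int_0^T\Phi(Tu'(s))\,ds$ after the outer average over $r$, with the stated constant.
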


\begin{proof}  Applying Jensen's inequality twice, we get
\begin{equation*}
\begin{split}
\Phi(\tilde{u}(t))&=\Phi\left(\frac{1}{T}\int_0^T \left(u(t)-u(s)\right) ds\right)\\
&\leq\frac{1}{T}\int_0^T \Phi(u(t)-u(s))ds\\
&\leq \frac{1}{T}\int_0^T \Phi\left(\int_s^t |t-s| u'(r)\frac{dr}{|t-s|}\right)ds\\
&\leq 
\frac{1}{T}\int_0^T \frac{1}{|t-s|} \int_s^t\Phi\left(|t-s| u'(r)\right)drds.
\end{split}
\end{equation*}
From \ref{eq:escalar_ine} we have that $\Phi(rx)/r$ is increasing with respecto to $r>0$ for a fixed $x\in\rr^d$. Therefore, previous inequality  implies \eqref{eq:wirtinger}.
\end{proof}

\begin{comentario}\label{com:equiv-norm} As a consequence of Lemma \ref{lem:inclusion orlicz}, we obtain that 
  \[\|u\sobnor'=|\overline{u}|+\|u'\orlnor,\]
  defines an equivalent norm to $\|\cdot\sobnor$ on $\wphi([0,T],\rr^d)$.   
\end{comentario}
Another immediate consequence of  Lemma \ref{lem:inclusion orlicz} is the following result.
 \begin{cor}\label{cor:unif_conv} Every bounded sequence $\{u_n\}$ in  $\wphi([0,T],\rr^d)$  has an uniformly convergent subsequence. 
\end{cor}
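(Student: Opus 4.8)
The plan is to deduce this corollary directly from the results already in hand, specifically from Lemma \ref{lem:inclusion orlicz}, parts 3 and 4, together with standard facts about bounded sequences. First I would recall that, by Remark \ref{com:equiv-norm}, the norm $|\overline{u}| + \|u'\|_{L^\Phi}$ is equivalent to $\|\cdot\|_{W^1 L^\Phi}$, so that a sequence $\{u_n\}$ bounded in $W^1 L^\Phi$ has both $\{\overline{u_n}\}$ bounded in $\rr^d$ and $\{\|u_n'\|_{L^\Phi}\}$ bounded. The decomposition $u_n = \overline{u_n} + \widetilde{u_n}$ is the natural device: I would handle the constant part and the oscillating part separately.

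For the constant parts, $\{\overline{u_n}\}$ is a bounded sequence in $\rr^d$, hence by Bolzano–Weierstrass it has a convergent subsequence; pass to that subsequence so that $\overline{u_n} \to c$ for some $c \in \rr^d$. For the oscillating parts, I would invoke the Poincaré–Wirtinger inequality \eqref{eq:wirtinger-iso} to get $\|\widetilde{u_n}\|_{L^\infty} \leq T A_\Phi^{-1}(1/T)\|u_n'\|_{L^\Phi}$, which is bounded; and Morrey's inequality \eqref{in-sob-cont} gives the uniform modulus of continuity estimate $|u_n(t) - u_n(s)| \leq |s-t| A_\Phi^{-1}(1/|s-t|)\|u_n'\|_{L^\Phi}$, where the right-hand side tends to $0$ as $|s-t| \to 0$ uniformly in $n$ (since $A_\Phi$ is an $N_\infty$ function, $r A_\Phi^{-1}(1/r) \to 0$ as $r \to 0^+$). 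Thus $\{\widetilde{u_n}\}$ — equivalently $\{u_n\}$ up to the bounded constants — is uniformly bounded and equicontinuous, so the Arzelà–Ascoli theorem yields a uniformly convergent subsequence. Combining with the convergence $\overline{u_n}\to c$ along a further subsequence gives a uniformly convergent subsequence of $\{u_n\}$ itself. Alternatively, one notes that part \ref{it:embeding} of Lemma \ref{lem:inclusion orlicz} already states that $W^1 L^\Phi$ embeds compactly in $C([0,T],\rr^d)$, and a compact embedding sends bounded sequences to precompact ones, which is exactly the claim; I would present the Arzelà–Ascoli argument as it is self-contained and makes the dependence on the lemma explicit.

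There is no real obstacle here — the corollary is essentially a restatement of the compact embedding in part \ref{it:embeding}, repackaged in sequential form. The only point requiring a moment's care is verifying that the bound on the $W^1 L^\Phi$ norm controls both $\overline{u_n}$ and $u_n'$ in $L^\Phi$ (handled by Remark \ref{com:equiv-norm}) and that $r A_\Phi^{-1}(1/r) \to 0$ as $r\to 0^+$, so that Morrey's inequality genuinely delivers equicontinuity; this last limit follows from $A_\Phi$ being an $N_\infty$ function, since superlinear growth of $A_\Phi$ at infinity translates into $A_\Phi^{-1}(s) = o(s)$ as $s \to \infty$.
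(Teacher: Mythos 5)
Your proof is correct and takes essentially the same route as the paper, which simply derives the corollary as an immediate consequence of Lemma \ref{lem:inclusion orlicz} (in particular the compact embedding of item \ref{it:embeding}); your Arzel\`a--Ascoli argument via \eqref{eq:wirtinger-iso} and \eqref{in-sob-cont} just unpacks that embedding explicitly.
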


\section{Existence of minimizers}\label{sec:existencia}

It is well known that an important ingredient in the direct method of the calculus of variations is the coercivity of action integrals. In order to obtain  coercivity for the action integral $I$, defined in  \eqref{eq:integral_accion}, it is necessary to impose more restrictions on the potential $F$.

There are several restrictions that were explored in the past. The one we will study in this article is based on what is known in the literature as sublinearity (see \cite{tang1998periodic,wu1999periodic,zhao2004periodic} for the Laplacian, \cite{tang2010periodic,li2015infinitely} for the $p$-Laplacian and \cite{yang2013existence,li2014periodic,pasca2010periodic,pacsca2010some} for $(p_1,p_2)$-Laplacian). In the current article we will use another denomination for this property.

\begin{defi} Let $F:[0,T]\times \rr^d\to\rr$ be a function satisfying \ref{item:condicion_c} and \ref{item:condicion_a}. We say that $F$ satisfies condition \eqref{eq:cond-sub} if there exist an $N_{\infty}$ function $\Phi_0$, with $\Phi_0 \llcurly \Phi$ and
a function $d \in  L^1([0,T],\rr)$, with $d\geq 1$, such that

\begin{equation}\label{eq:cond-sub}
  \Phi^{\star}\left(\frac{\nabla_x F}{d(t)}\right)\leq \Phi_0(x)+1.\tag{$B$}
\end{equation}
\end{defi}

The condition \eqref{eq:cond-sub} encompasses the sublinearity condition as it was introduced in the context of $p$-laplacian or $(p_1,p_2)$-laplacian systems. For example, in
 \cite[Thm. 1.1.]{li2014periodic} Li, Ou and Tang considered a potential $F:[0,T]\times\rr^d\times\rr^d\to\rr$ satisfying \ref{item:condicion_c} and \ref {item:condicion_a} and the following condition  (we recall that $p'=p/(p-1)$).
\begin{enumerate}   
  \labitem{(H)}{item:condicion_h1} There exist $f_i,g_i,h_i\in L^1([0,T],\rr_+)$,  $\alpha_i\in [0,p_i/p_i')$, $i=1,2$,  $\beta_1\in [0,p_2/p_1')$ and $\beta_2\in [0,p_1/p_2')$ such that
 \begin{equation*}
    \begin{array}{cc}
    |\nabla_{x_1}F(t,x_1,x_2)|&\leq f_1(t)|x_1|^{\alpha_1}+g_1(t)|x_2|^{\beta_1}+h_1(t),\\
    |\nabla_{x_2}F(t,x_1,x_2)|&\leq f_2(t)|x_2|^{\alpha_2}+g_2(t)|x_1|^{\beta_2}+h_2(t).
    \end{array}
 \end{equation*}
\end{enumerate}
We leave to the reader to prove that \ref{item:condicion_h1} implies \eqref{eq:cond-sub}, with $\Phi=\Phi_{p_1,p_2}$,
$\Phi_0=\Phi_{\overline{p}_1,\overline{p}_2}$, where $\overline{p}_i$, $i=1,2$, are taken so that  $\max\{\alpha_1p_1',\beta_2p_2'\}\leq \overline{p}_1<p_1$ and $\max\{\alpha_2p_2',\beta_1p_1'\}\leq \overline{p}_2<p_2$ and  $d=C(1+\sum_i \{f_i+g_i+h_i\})\in L^1$, with  $C>0$ chosen large enough.

\begin{thm}\label{coercitividad-r}
  Let $\Phi$ be an $N_{\infty}$-function whose complementary function $\Phi^{\star}$ satisfies the $\Delta_2$-condition. Let $F$ be a potential that satisfies \ref{item:condicion_c}, \ref{item:condicion_a},\eqref{eq:cond-sub} and the following condition
\begin{equation}\label{eq:propiedad-coercividad-phi0}
\lim_{|x|\to\infty}\frac{\int_{0}^{T}F(t,x)\ dt}{\Phi_0(2x)}=+\infty.
\end{equation}
Let $M$ be a weak${\star}$ closed subspace of $\lphi$ and let $V\subset C([0,T],\rr^d)$ be closed in the $C([0,T],\rr^d)$-strong topology. Then  $I$ attains a minimum on $H=\{u\in \wphi | u\in V\text{ and } u'\in M\}$.
\end{thm}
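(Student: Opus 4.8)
The plan is to apply the direct method of the calculus of variations: show that $I$ is coercive on $H$ with respect to an appropriate norm, show that $H$ is closed under a suitable (weak-type) convergence, show that $I$ is sequentially lower semicontinuous for that convergence, and then extract a minimizing sequence whose limit realizes the infimum.

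\textbf{Step 1: Coercivity.} Using the equivalent norm from Remark \ref{com:equiv-norm}, write $u=\overline{u}+\widetilde{u}$ and estimate
\[
I(u)=\int_0^T\Phi(u'(t))\,dt+\int_0^T F(t,u(t))\,dt.
\]
For the potential term, first I would split $F(t,u(t))=F(t,\overline{u})+\int_0^1\nabla_xF(t,\overline{u}+s\widetilde{u}(t))\cdot\widetilde{u}(t)\,ds$. The cross term is controlled via H\"older's inequality \eqref{holder} against the weight $d(t)$, condition \eqref{eq:cond-sub}, and the Poincar\'e--Wirtinger estimates \eqref{eq:wirtinger-iso} and especially the anisotropic inequality \eqref{eq:wirtinger}, which bounds $\Phi_0(\widetilde{u}(t))$-type quantities by $\rho_{\Phi_0}(Tu')$. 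Since $\Phi_0\llcurly\Phi$, for any prescribed small $\varepsilon>0$ one has $\Phi_0(kx)\leq C(\varepsilon)+\varepsilon\,\Phi(x)$ (up to the appropriate scaling of the argument), so the $\widetilde u$-dependent part of the cross term can be absorbed into $\rho_\Phi(u')\leq\int_0^T\Phi(u')\,dt$ plus an $L^1$ remainder; the $\Phi_0(\overline u)$-part is absorbed using \eqref{eq:propiedad-coercividad-phi0}. Then using $F(t,\overline u)\geq -(\text{something controlled by }\Phi_0(\overline u))$ via \eqref{eq:cond-sub} integrated from $0$, together with \eqref{eq:propiedad-coercividad-phi0}, the term $\int_0^T F(t,\overline u)\,dt+\int_0^T\Phi(u')\,dt$ dominates a multiple of $\Phi_0(2\overline u)+\rho_\Phi(u')$ and hence tends to $+\infty$ as $|\overline u|+\|u'\orlnor\to\infty$ (using \eqref{eq:amemiya} to pass between modular and norm). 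This gives $I(u)\to+\infty$ on $H$.

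\textbf{Step 2: Compactness of minimizing sequences.} Let $\{u_n\}\subset H$ be a minimizing sequence. Coercivity forces $\{u_n\}$ bounded in $\wphi$. By Corollary \ref{cor:unif_conv} (equivalently Lemma \ref{lem:inclusion orlicz}\eqref{it:embeding}), a subsequence converges uniformly to some $u\in C([0,T],\rr^d)$, so $u\in V$ since $V$ is $C$-closed. Since $\{u_n'\}$ is bounded in $\lphi=[\epsi]^\star$, by weak$\star$ compactness (Banach--Alaoglu, using the identification stated before the definition of $\wphi$) a further subsequence has $u_n'\rightharpoonup^\star v$ in $\lphi$; since $M$ is weak$\star$ closed, $v\in M$. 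A standard argument (testing the weak$\star$ convergence against indicator functions, which lie in $L^\infty\subset\epsi$) identifies $v=u'$, so $u\in\wphi$ is absolutely continuous with $u'\in M$, and moreover $u(0)=u(T)$ passes to the limit from uniform convergence; hence $u\in H$.

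\textbf{Step 3: Lower semicontinuity.} For the term $\int_0^T F(t,u_n(t))\,dt$, uniform convergence $u_n\to u$ together with the growth bound \ref{item:condicion_a} (so $|F(t,u_n(t))|\leq \sup_n\|a\circ u_n\|_\infty\,b(t)$, dominated by an $L^1$ function since the $u_n$ are uniformly bounded) gives convergence of this term by dominated convergence. For $\int_0^T\Phi(u_n'(t))\,dt$, convexity of $\Phi$ gives weak$\star$ lower semicontinuity of the convex integral functional $v\mapsto\int_0^T\Phi(v)\,dt$ on $\lphi$ with respect to the weak$\star$ topology induced by $\epsi$ — this is the standard De Giorgi/Ioffe type result, which here can be obtained from Mazur's lemma applied to a suitable subsequence or directly from the subgradient inequality $\Phi(u_n')\geq\Phi(u')+\nabla\Phi(u')\cdot(u_n'-u')$ together with $\nabla\Phi(u')\in\lpsi$ (a consequence of \ref{eq:young-id} and $\Phi\in\nabla_2$) pairing against $u_n'-u'\rightharpoonup^\star 0$. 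Combining, $I(u)\leq\liminf I(u_n)=\inf_H I$, so $u$ is a minimizer.

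\textbf{Main obstacle.} The delicate point is Step 1: making the absorption precise. One must carefully track how the hypothesis $\Phi_0\llcurly\Phi$ converts into an inequality of the form $\Phi_0(Tx)\leq\varepsilon\Phi(x)+C(\varepsilon)$ with $\varepsilon$ chosen after fixing the H\"older constant $2$ and the $\nabla_2$-constants, so that the $\widetilde u$-contribution to the cross term is genuinely subordinate to $\int_0^T\Phi(u')\,dt$ rather than merely comparable; simultaneously one must ensure the $\overline u$-contribution is handled purely by \eqref{eq:propiedad-coercividad-phi0} with the factor $2$ in $\Phi_0(2\overline u)$ left to spare (it is there precisely to absorb the splitting $\Phi_0(\overline u+s\widetilde u)\leq C\Phi_0(2\overline u)+C\Phi_0(2\widetilde u)+1$ via property \ref{eq:quasi-sub-aditividad}, which requires $\Phi_0\in\Delta_2$ — note $\Phi_0\llcurly\Phi$ and $\Phi^\star\in\Delta_2$ do give enough regularity on $\Phi_0$ to run this). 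A secondary technical point is the identification $v=u'$ in Step 2 and the rigorous justification of weak$\star$ lower semicontinuity of the modular in Step 3, but both are by-now-standard once the duality $\lphi=[\epsi]^\star$ is in hand.
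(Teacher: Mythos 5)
Your proposal follows the same overall route as the paper: the same decomposition $F(t,u)=F(t,\overline u)+\int_0^1\nabla_xF(t,\overline u+s\widetilde u)\cdot\widetilde u\,ds$ for coercivity, absorption of the $\widetilde u$-part via $\Phi_0\llcurly\Phi$, the anisotropic Poincar\'e--Wirtinger inequality and the $\nabla_2$-condition, then uniform convergence plus weak$\star$ compactness of the derivatives in $\lphi=[\epsi]^{\star}$, and lower semicontinuity by convexity. Two points need repair. First, for the splitting $\Phi_0(\overline u+s\widetilde u)\leq\frac12\Phi_0(2\overline u)+\frac12\Phi_0(2\widetilde u)$ you invoke \ref{eq:quasi-sub-aditividad} and claim that $\Phi_0\in\Delta_2$ follows from $\Phi_0\llcurly\Phi$ and $\Phi^{\star}\in\Delta_2$; it does not (neither hypothesis bounds the growth of $\Phi_0$ from above in the $\Delta_2$ sense). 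Fortunately no $\Delta_2$ property is needed: plain convexity gives $\Phi_0(\overline u+s\widetilde u)=\Phi_0\left(\tfrac12(2\overline u)+\tfrac12(2s\widetilde u)\right)\leq\frac12\Phi_0(2\overline u)+\frac12\Phi_0(2s\widetilde u)$, and \ref{eq:escalar_ine_2} removes the $s$; this is exactly what the paper does. Similarly, the bound on the cross term comes from the pointwise Young inequality \ref{eq:young-ine} applied to $d(t)^{-1}\nabla_xF$ and $\widetilde u/\lambda$ and multiplied by $\lambda d(t)$, which produces the modular quantities that \eqref{eq:cond-sub} and \eqref{eq:wirtinger} control; the integral H\"older inequality \eqref{holder} only yields norms and does not combine with these hypotheses in the required way.

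Second, your subgradient route for the lower semicontinuity of $v\mapsto\rho_{\Phi}(v)$ is not available here: for a general limit $u'\in\lphi$ one only knows $\Phi(\lambda u')\in L^1$ for some possibly small $\lambda>0$, so $\nabla\Phi(u')$ need not belong to $\epsi$ (nor even to the Orlicz class $C^{\Phi^{\star}}$) when $\Phi\notin\Delta_2$, and the pairing $\int_0^T\nabla\Phi(u')\cdot(u_n'-u')\,dt$ is then not controlled by weak$\star$ convergence. Your alternative via Mazur's lemma is the one that works, but it must be run after passing from weak$\star$ convergence in $\lphi$ to weak convergence in $L^1$ through the embedding $\lphi\hookrightarrow L^1$ (Mazur's lemma applies to weak, not weak$\star$, convergence); combined with Fatou's lemma and with the $F$-term handled by dominated convergence under uniform convergence of $u_n$, this recovers exactly the semicontinuity that the paper obtains by citing the Ioffe-type theorem of Buttazzo--Giaquinta--Hildebrandt. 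With these two adjustments your argument is complete and coincides with the paper's proof.
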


\begin{proof} \emph{Step 1. The action integral is coercive}.

  Let $\lambda$ be any positive number with $\lambda>2\max\{T,1\}$. Since $\Phi_0\llcurly \Phi$ there exists $C(\lambda)>0$ such that 
  \begin{equation}\label{eq:cond_C}
    \Phi_0(x)\leq \Phi\left(\frac{x}{2\lambda}\right)+C(\lambda),\quad  x\in\rr^d. 
  \end{equation}
By the decomposition $u=\overline{u}+\tilde{u}$, the absolutely continuity of $F(t,x+sy)$ with respect to $s\in\rr$,  Young's inequality, \eqref{eq:cond-sub}, the convexity of $\Phi_0$, \ref{eq:escalar_ine_2}, \eqref{eq:cond_C} and \eqref{eq:wirtinger}  we obtain
\begin{equation*}\label{eq:cota-dif-F}
\begin{split}
J&:=\left|\int_0^T F(t,u)-F(t,\overline{u})dt\right|
\\
&\leq \int_0^T \int_0^1 |\nabla_x F(t,\overline{u}+s\tilde{u})\tilde{u}|dsdt
\\
&\leq
\lambda \int_0^T d(t) \int_0^1 \left[ \Phi^{\star}\left(d^{-1}(t)\nabla_xF(t,\overline{u}+s\tilde{u})\right)+\Phi\left(\frac{\tilde{u}}{\lambda}\right)\right]dsdt
\\
&\leq
\lambda\int_0^T d(t)\int_0^1\left[ \frac12\Phi_0(2\overline{u})+\frac12\Phi_0(2\tilde{u})ds+\Phi\left(\frac{\tilde{u}}{\lambda}\right)+1 \right]dsdt
\\
&\leq
\lambda\int_0^T d(t)\int_0^1\left[ \Phi_0(2\overline{u})+2\Phi\left(\frac{\tilde{u}}{\lambda}\right)+C(\lambda) \right]dsdt
\\
&\leq C_1 \Phi_0(2\overline{u})+\lambda C_2 \int_0^T \Phi\left(\frac{Tu'(s)}{\lambda}\right)ds+C_1,
\end{split}
\end{equation*}
where $C_2=C_2(\|d\|_{L^1})$ and $C_1=C_1(\|d\|_{L^1},\lambda)$.  Since $\Phi^{\star} \in \Delta_2$ we can choose $\lambda$ large enough so that $l=\lambda T^{-1}$ satisfies  \eqref{eq:nabla2} for $r=\frac12 \min\{(C_2T)^{-1},1\}$. Thus, we have
\[
 J\leq C_1 \Phi_0(2\overline{u})+\frac12 \int_0^T \Phi\left(u'(s)\right)ds+C_1.
\]
Then
\begin{equation}
\begin{split}
I(u)&=\int_0^T  \Phi(u')+F(t,u)dt\\
&= \int_0^T \{\Phi(u')+[F(t,u)-F(t,\overline{u})]+F(t,\overline{u})\}dt
\\
&\geq
\frac12\int_0^T \Phi(u')dt-C_1\Phi_0(2 \overline{u})+\int_0^T F(t,\overline{u})dt-C_1
\end{split}
\end{equation}

We take $u_n\in\wphi$ with $\|u_n\sobnor\to\infty$. From Remark \ref{com:equiv-norm}, we can suppose that $\|u_n'\orlnor \to \infty$ or $|\overline{u}_n|\to \infty$ . In the first case,from \eqref{eq:amemiya}  we have  that $\rho_\Phi(u_n)\to\infty$ and hence  $I(u_n)\to\infty$. In the second case,  $I(u_n)\to\infty$ as a consequence of \eqref{eq:propiedad-coercividad-phi0}.

\emph{Step 2. Suppose that $u_n\to u$ uniformly and $u_n'\overset{\star}{\rightharpoonup} u'$ in $\lphi([0,T],\rr^d)$ then
$
I(u)\leq\liminf_{n\to\infty}I(u_n)
$.
}

  Without loss of generality, passing to subsequences, we may assume that the $\liminf$ is really a $\lim$. The embedding  $\lphi([0,T],\rr^d)\hookrightarrow L^1([0,T],\rr^d)$ implies that  $u_n'\rightharpoonup u' $ in $ L^1([0,T],\rr^d)$.  Now, applying \cite[Thm. 3.6]{buttazzo1998one} we obtain $I(u)\leq \lim_{n\to\infty}I(u_n)$.

\emph{ Final step.}
The proof of the theorem is concluded with a usual argument. We take a minimizing sequence $u_n \in H$ of $I$. 
From the coercivity of $I$ we have that $u_n$ is bounded on $\wphi([0,T],\rr^d)$. By Corollary \ref{cor:unif_conv}  (passing to subsequences) we can suppose that $u_n$ converges uniformly to a function $u\in V$. 
On the other hand, $u'_n$ is bounded on $\lphi=\left[\epsi\right]^\star$. Thus,  
since $E^{\Phi^{\star}}$ is separable (see \cite[Thm. 6.3]{Orliczvectorial2005}), from \cite[Cor. 3.30]{brezis2010functional} 
it follows that there exist a subsequence of $u'_n$ (we denote it $u'_n$ again) and $v \in M$ such that $u'_n\overset{\star}{\rightharpoonup}v$. From this fact and the uniform convergence of $u_n$ to $u$, we obtain that
\[
\int_0^T\varphi'\cdot u\ dt=\lim_{n\to\infty}\int_0^T\varphi'\cdot u_n \ dt=
-\lim_{n\to\infty}\int_0^T\varphi\cdot u'_n\ dt=-\int_0^T\varphi\cdot v\ dt,
\]
for every function $\varphi\in C^{\infty}([0,T],\rr^d)\subset\epsi$ with $\varphi(0)= \varphi(T)=0$.
Thus, $u$ has a  derivative in the weak sense in $\lphi$. Taking account of 
$\lphi \hookrightarrow L^1$ and \cite[Thms. 2.3 and 2.17 ]{buttazzo1998one}, 
we obtain $u\in \wphi$ and $v=u'$ a.e. $t\in [0,T]$.
Hence, $u \in H$. 

Finally, the semicontinuity of $I$ that was established in step 2  implies that $u$ is a minimum of $I$.
\end{proof}

\begin{comentario} The results of this section can be extended without difficulty to any Lagrangian $\mathcal{L}$ with $\mathcal{L}\geq \mathcal{L}_{\Phi,F}$ (see \cite{ABGMS2015}).
\end{comentario}

\section{Regularity of minimizers and solutions of Euler-Lagrange equations}\label{sec:diferenciabilidad}

In this section, we will address the question of when minimizers of $I$ are solutions of the associated  Euler-Lagrange equations. It is a classic result that minimizers satisfying an a priori smothness condition (e.g. Lipschitz continuity) are solutions of them.  In Theorem \ref{th:tor_prin} we obtain a better a priori condition for the action integral under consideration.

We denote by $\lip([0,T],\rr^d)$ the set of $\rr^d$-valued Lipschitz continuous functions defined on $[0,T]$. If $X\subset \lphi([0,T],\rr^d)$ and $u\in \lphi([0,T],\rr^d)$,  we denote by $d(u,X)$ the distance from $u$ to $X$ computed with respect  to the Luxemburg norm. We recall that  $u\in\lip([0,T],\rr^d)$ implies that $d(u',L^{\infty}([0,T],\rr^d))=0$. The following is the main result of this section.

\begin{thm}\label{th:tor_prin} Assume   that $F$ is as in Theorem \ref{coercitividad-r} and $\Phi$ is  strictly convex. If $u$ is a minimum of $I$ on the set $H=\{u\in\wphi([0,T],\rr^d)| u(0)=u(T)\}$ and $d(u',L^{\infty}([0,T],\rr^d))<1$, then $u$ is solution of \eqref{eq:ProbPhiLapla}. 
\end{thm}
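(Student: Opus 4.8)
The plan is to follow the classical ``Lagrange multiplier / du Bois-Reymond'' route for showing that a minimizer of the action integral satisfies the Euler--Lagrange equation, adapted to the anisotropic Orlicz--Sobolev setting, where the crucial non-classical ingredient is the a priori hypothesis $d(u',L^\infty)<1$. First I would set up the first variation: for any admissible variation $\varphi\in W^1L^\Phi([0,T],\rr^d)$ with $\varphi(0)=\varphi(T)$, consider $g(s):=I(u+s\varphi)$ and aim to show $g'(0)=0$. This requires differentiating under the integral sign, which is where the a priori condition enters decisively. Since $d(u',L^\infty)<1$, by the inclusion \eqref{eq:inclusiones} (with $r$ slightly less than $1$) one gets $u'\in r\claseor$ for some $r<1$, i.e.\ $\rho_\Phi(u'/r)<\infty$, and hence a whole neighbourhood of $s=0$ keeps $u'+s\varphi'$ in a set on which $\Phi$ and $\nabla\Phi$ are controlled; similarly, the growth hypotheses \ref{item:condicion_a} together with the compact embedding $W^1L^\Phi\hookrightarrow C([0,T],\rr^d)$ (Lemma \ref{lem:inclusion orlicz}\eqref{it:embeding}) bound $F(t,u+s\tilde u)$ and $\nabla_xF(t,u+s\tilde u)$ uniformly for small $s$ by an $L^1$ function, so dominated convergence applies. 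This yields
\[
0=g'(0)=\int_0^T \nabla\Phi(u'(t))\cdot\varphi'(t)\ dt+\int_0^T \nabla_xF(t,u(t))\cdot\varphi(t)\ dt
\]
for all such $\varphi$.

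Next I would extract the Euler--Lagrange equation from this weak identity. Taking first $\varphi$ with $\varphi(0)=\varphi(T)=0$, the fundamental lemma of the calculus of variations (in the du Bois-Reymond form, using that $\nabla_xF(\cdot,u(\cdot))\in L^1$ by \ref{item:condicion_a} and the uniform bound on $u$) gives that $\nabla\Phi(u'(\cdot))$ is absolutely continuous and
\[
\frac{d}{dt}\nabla\Phi(u'(t))=\nabla_xF(t,u(t))\quad\text{for a.e.\ }t\in(0,T).
\]
Then, allowing $\varphi$ with $\varphi(0)=\varphi(T)$ arbitrary and integrating by parts, the boundary term forces $\nabla\Phi(u'(0))=\nabla\Phi(u'(T))$. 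Here I would invoke the strict convexity of $\Phi$: strict convexity makes $\nabla\Phi$ injective (equivalently $\Phi^\star$ is differentiable with $\nabla\Phi^{-1}=\nabla\Phi^\star$), so $\nabla\Phi(u'(0))=\nabla\Phi(u'(T))$ upgrades to $u'(0)=u'(T)$. Together with $u(0)=u(T)$ (built into $H$), this gives all the boundary conditions in \eqref{eq:ProbPhiLapla}.

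\textbf{Main obstacle.} I expect the delicate point to be the differentiation under the integral sign for the $\Phi(u')$ term --- specifically, justifying that $s\mapsto\int_0^T\Phi(u'+s\varphi')\,dt$ is differentiable at $0$ with derivative $\int_0^T\nabla\Phi(u')\cdot\varphi'\,dt$. Without a $\Delta_2$-type condition on $\Phi$ (which is deliberately \emph{not} assumed --- indeed $\Phi^\star\in\Delta_2$ is imposed, which is the $\nabla_2$-condition on $\Phi$, the opposite direction), the difference quotient $[\Phi(u'+s\varphi')-\Phi(u')]/s$ need not have an integrable majorant for $u'$ ranging over all of $L^\Phi$. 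This is exactly what the hypothesis $d(u',L^\infty)<1$ repairs: it confines $u'$ to a set where, for $|s|$ small, $u'+s\varphi'$ still satisfies $\rho_\Phi$ of a fixed dilate being finite, and convexity of $\Phi$ gives the monotone bound $|\Phi(u'+s\varphi')-\Phi(u')|/|s|\le \max\{\nabla\Phi(u'+\varphi')\cdot\varphi',\,-\nabla\Phi(u'-\varphi')\cdot\varphi'\}$ pointwise for $|s|\le 1$, whose right-hand side lies in $L^1$ by Hölder's inequality \eqref{holder} since $\nabla\Phi(u'\pm\varphi')\in L^{\Phi^\star}$ (using the $\Delta_2$-condition on $\Phi^\star$ and property \ref{eq:young-id}) and $\varphi'\in L^\Phi$. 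Pinning down this domination estimate cleanly, and making sure the chosen $r<1$ in \eqref{eq:inclusiones} leaves enough room to absorb the variation $\varphi'$, is the technical heart of the argument; the rest is standard.
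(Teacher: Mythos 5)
Your proposal is correct and follows essentially the same route as the paper: the hypothesis $d(u',L^{\infty})<1$ is used exactly to secure G\^ateaux differentiability of $I$ at $u$ (the paper delegates this to Theorem \ref{teo:diferenciabilidad} via Remark \ref{com:lphi-satis S}, while you inline the domination argument for the difference quotients), after which Fermat's rule, the fundamental lemma, and the injectivity of $\nabla\Phi$ from strict convexity yield the equation and the boundary condition $u'(0)=u'(T)$. The only cosmetic difference is your direct convexity-monotonicity majorant for $[\Phi(u'+s\varphi')-\Phi(u')]/s$ versus the paper's structure-condition estimate \eqref{eq:cota-u+sv}; both hinge on the same room left by $d(u',L^{\infty})<1$.
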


\begin{comentario} We observe that $H=\{u\in \wphi | u\in V\text{ and } u'\in M\}$ where
\[V:=\{u\in C([0,T],\rr^d)|u(0)=u(T)\},\quad M:=\lphi([0,T],\rr^d),\]
 and $V$ is $C([0,T],\rr^d)$-closed. Therefore, from the results of previous section  the functional $I$ given by \eqref{eq:integral_accion}, has a minimum $u$ on  $H$ and  $d(u',L^{\infty})\leq 1$. The last inequality   follows from  $\rho_{\Phi}(u')<\infty$ and \eqref{eq:inclusiones}. Then, the possible minima of $I$ that do not satisfy the hypotheses of  Theorem \ref{th:tor_prin} lie on the nowhere dense set $ \{u:d(u',L^{\infty})=1\}$ .

\end{comentario}

The proof of the previous theorem depends on the G\^ateaux differentiability of the action integral on the space $\wphi([0,T],\rr^d)$. We will deal with a more general lagrangian function $\mathcal{L}:[0,T]\times\rr^d\times\rr^d\to\rr$, which  is assumed  measurable in $t$ for each $(x,y)\in \rr^d\times\rr^d$ and  continuously differentiable at $(x,y)$ for almost every $t \in [0,T]$. We consider 
\begin{equation}\label{eq:integral_accion_gen} I(u)=I_{\mathcal{L}}(u)=\int_0^T\mathcal{L}(t,u(t),u'(t))dt,\tag{$IG$}
\end{equation}
 the action integral associated to $\mathcal{L}$. In order to obtain differentiability of $I$, it is necessary to impose some constraints on $\mathcal{L}$. In the paper \cite{chamra2017anisotropic}, Chmara and Maksymiuk  obtained differentiability for $I$ on $\wphi$ assuming  a similar condition  to Definition \ref{def:estructura} and additionally $\Phi\in\Delta_2\cap\nabla_2$. For our purpose, the condition $\Phi\in\Delta_2$ is a very serious limitation     since it leaves out of consideration functions that grow faster than power ones. According to our criterion, including Lagrangians with a fast growth  than power functions  is one of the greatest achievements of this paper.  For this reason, we present a proof  of the results obtained in \cite{chamra2017anisotropic} without the assumption $\Phi\in\Delta_2$. When $\Phi\notin\Delta_2$, the  differentiability of $I$ is somewhat more delicate since the effective domain of $I$ is not the whole space $\wphi$.

\begin{defi}\label{def:estructura} We say that a Lagrangian $\mathcal{L}$ satisfies the condition \eqref{eq:condicion-estructura} if
\begin{equation}\label{eq:condicion-estructura}
  |\mathcal{L}|+ |\nabla_{x}\mathcal{L}|+\Phi^{\star}\left(\frac{\nabla_{y}\mathcal{L}}{\lambda}\right)
\leq
a(x)\left[b(t)+ \Phi\left(\frac{y}{\Lambda}\right)\right], \tag{$S$}
\end{equation}
for a.e. $t\in [0,T]$,
where  $a\in C\left(\mathbb{R}^d,[0,+\infty)\right)$, $b\in L^1\left([0,T],[0,+\infty)\right) $ and $\Lambda,\lambda>0$.
\end{defi}

Condition \eqref{eq:condicion-estructura} includes structure conditions that have  been previously considered in the literature in the case of $p$-laplacian and $(p_1,p_2)$-laplacian systems. For example, it is easy to see that, when $\Phi(x)=\Phi_p(x)=|x|^p/p$, then  condition \eqref{eq:condicion-estructura}  is equivalent to the structure conditions in  \cite[Thm. 1.4]{mawhin2010critical}.  If $\Phi$ is a radial $N_{\infty}$ function such that $\Phi^{\star}$ satisfies  the $\Delta_2$-condition,  then \eqref{eq:condicion-estructura} is related to conditions  \cite[Eq. (2)-(4)]{ABGMS2015}.   If $\Phi=\Phi_{p_1,p_2}$ is as in Equation \eqref{eq:phip_1p_2} and $\mathcal{L}=\mathcal{L}(t,x_1,x_2,y_1,y_2)$ is a Lagrangian with $\mathcal{L}:[0,T]\times\rr^d\times\rr^d\times\rr^d\times\rr^d\to\rr$, then inequality \eqref{eq:condicion-estructura} is related to structure conditions like
\cite[Lemma 3.1, Eq. (3.1)]{Tian2007192}. As can be seen, condition \eqref{eq:condicion-estructura} is a more compact expression than \cite[Lemma 3.1, Eq. (3.1)]{Tian2007192} and moreover   weaker, because  \eqref{eq:condicion-estructura} does not imply a control of
$|D_{y_1}L|$ independent of $y_2$.

\begin{comentario} We leave to the reader the proof of the fact that if a Lagrange function $\mathcal{L}$ satisfies structure condition \eqref{eq:condicion-estructura} and $\Phi\strictif \Phi_0$, then $\mathcal{L}$ satisfies \eqref{eq:condicion-estructura} with $\Phi_0$ instead of $\Phi$ and possibly with other functions $b$, $a$ and constants $\Lambda$ and $\lambda$.
 \end{comentario}

\begin{comentario}\label{com:lphi-satis S}
 The Lagrangian $\mathcal{L}=\mathcal{L}_{\Phi,F}=\Phi(y)+F(t,x)$ satisfies condition  \eqref{eq:condicion-estructura}, for every $\Lambda<1$. In order to prove this, the only non trivial fact that we should establish is that $ \Phi^{\star}(\nabla_{y}\mathcal{L})
\leq
a(x)\left\{b(t)+ \Phi\left({y}/{\Lambda}\right)\right\}$. From  \ref{eq:young-id} and the fact that $(d/dt)\Phi(tx)=\nabla\Phi(tx)\cdot x$ is an non decreasing function of $t$, we obtain  

\[
  \begin{split}
     \Phi^{\star}(\nabla \Phi (x))\leq  x\cdot \nabla\Phi(x) \leq\frac{1}{\Lambda^{-1}-1}\int_1^{\Lambda^{-1}} \frac{d}{dt}\Phi(tx)dt \leq\frac{1}{\Lambda^{-1}-1}\Phi(\Lambda^{-1} x). \end{split}
\]
Therefore  $\Phi^{\star}(\nabla_{y}\mathcal{L})=\Phi^{\star}\left(\nabla\Phi(y)\right)\leq \Lambda(1-\Lambda)^{-1} \Phi(y/\Lambda )$, for every $\Lambda<1$. 
\end{comentario}

Given a function $a:\mathbb{R}^d\to \mathbb{R}$, we define the composition operator $\b{a}:\mathcal{M}\to \mathcal{M}$ by $\b{a}(u)(x)= a(u(x))$.
We will often use the following result whose proof can be performed as that of  Corollary 2.3 in \cite{ABGMS2015}.

\begin{lem}\label{lem:cota-a}
\label{a_bound} If $a\in C(\mathbb{R}^d,\mathbb{R}^+)$ then $\b{a}:\wphi\to L^{\infty}([0,T])$ is bounded.
More concretely,  there exists a non decreasing function $A:[0,+\infty)\to[0,+\infty)$ such that
 $\|\b{a}(u)\|_{L^{\infty}([0,T])}\leq A(\|u\|_{\wphi})$.
\end{lem}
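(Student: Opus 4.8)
\textbf{Proof plan for Lemma \ref{lem:cota-a}.}

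The plan is to reduce the statement to the Sobolev embedding $\wphi([0,T],\rr^d)\hookrightarrow C([0,T],\rr^d)$ established in Lemma \ref{lem:inclusion orlicz}\eqref{it:embeding} (or, more quantitatively, to the Sobolev inequality \eqref{eq:sobolev}) together with the continuity of $a$. First I would note that by \eqref{eq:sobolev} there is a constant $C_0=C_0(T,\Phi)$ such that $\|u\linf\leq C_0\|u\sobnor$ for every $u\in\wphi([0,T],\rr^d)$; in particular, if $\|u\sobnor\leq R$ then the range of $u$ is contained in the closed euclidean ball $\overline{B}(0,C_0R)\subset\rr^d$.

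Next I would use the continuity of $a$: since $a\in C(\rr^d,\rr^+)$ and $\overline{B}(0,C_0R)$ is compact, $a$ is bounded on $\overline{B}(0,C_0R)$, say by $m(R):=\sup_{|x|\le C_0R}a(x)<\infty$. Consequently, for a.e. $t\in[0,T]$ we have $\b{a}(u)(t)=a(u(t))\leq m(R)$, so $\|\b{a}(u)\|_{L^\infty([0,T])}\leq m(R)$ whenever $\|u\sobnor\leq R$. This already shows $\b{a}$ maps bounded sets to bounded sets. To get the claimed monotone majorant, I would simply set $A(R):=\sup_{|x|\le C_0R}a(x)$; this $A:[0,+\infty)\to[0,+\infty)$ is non-decreasing because the balls $\overline{B}(0,C_0R)$ increase with $R$, it is finite by compactness and continuity of $a$, and by the previous sentence $\|\b{a}(u)\|_{L^\infty([0,T])}\leq A(\|u\sobnor)$ for all $u\in\wphi([0,T],\rr^d)$. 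One should also check measurability of $\b{a}(u)$, which is immediate since $u$ is continuous and $a$ is continuous, so $\b{a}(u)=a\circ u$ is continuous, hence measurable and essentially bounded.

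There is no real obstacle here: the only mild subtlety is making sure one quotes a genuinely quantitative embedding (the Sobolev inequality \eqref{eq:sobolev}) rather than just the qualitative compact embedding, so that the bound on $\|u\linf$ is controlled explicitly by $\|u\sobnor$ and the majorant $A$ can be written down in closed form. Everything else is a direct consequence of the continuity of $a$ and compactness of closed balls in $\rr^d$.
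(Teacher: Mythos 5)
Your proof is correct and is essentially the argument the paper has in mind: the paper simply defers to the proof of Corollary 2.3 in \cite{ABGMS2015}, which proceeds exactly as you do, via the Sobolev inequality \eqref{eq:sobolev} to confine the range of $u$ to a compact ball and then taking $A(R)=\sup_{|x|\leq C_0R}a(x)$. No gaps.
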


The following lemma will be applied several times. We adapted the proof of 
\cite[Lemma 2.5]{ABGMS2015} to the anisotropic case.    For an alternative approach, we  suggest \cite{chamra2017anisotropic}.

\begin{lem}\label{segundo lema}
Let  $\{{u}_n\}_{n\in \mathbb{N}}$ be a sequence of  functions  converging to  ${u}\in \Pi(\ephi,\lambda)$  in the $\lphi$-norm. Then, there exist a subsequence
${u}_{n_k}$ and a real valued function $h\in L^1([0,T],\rr)$ such that ${u}_{n_k}\rightarrow {u} \quad\text{a.e.}$ and $\Phi({u}_{n_k}/\lambda)\leq h\quad\text{a.e.}$
\end{lem}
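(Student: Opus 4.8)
The plan is to adapt the classical argument showing that $L^p$-convergence passes to an a.e.\ convergent subsequence with an $L^p$-majorant, the twist being that the pointwise monotonicity of $t\mapsto|t|^p$ has no anisotropic analogue, so convexity of $\Phi$ (with $\Phi(0)=0$) must play its role, and that $\Phi\notin\Delta_2$ in general forbids rescaling $u$ by an arbitrary factor.

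First I would use $u\in\Pi(\ephi,\lambda)$, i.e.\ $d(u,L^\infty)<\lambda$, to fix $r'$ with $d(u,L^\infty)<r'<\lambda$ and a bounded function $w\in L^\infty([0,T],\rr^d)$ with $\|u-w\orlnor<r'$; set $\beta:=r'/\lambda\in(0,1)$ and $\gamma:=1-\beta>0$. The point of this splitting is that $\Phi(cw)$ lies in $L^1$ for every constant $c$ (since $w$ is bounded and $\Phi$ continuous), whereas the ``small'' part $u-w$ can be divided by $\beta\lambda$ without leaving the Orlicz class: from the definition of the Luxemburg norm and \ref{eq:escalar_ine_2}, $\|v\orlnor<\mu$ forces $\rho_\Phi(v/\mu)\leq1$, hence $\rho_\Phi\big((u-w)/(\beta\lambda)\big)\leq1$. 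Next I would fix a small $\eta\in(0,\gamma\lambda/8)$ and, using $\lphi\hookrightarrow L^1$, extract a subsequence $\{u_{n_k}\}$ with $\|u_{n_k}-u\orlnor\leq2^{-k}\eta$ for every $k$; since then $\sum_k\|u_{n_k}-u\|_{L^1}<\infty$, a standard argument gives $u_{n_k}\to u$ a.e.\ along this subsequence, and also $\|u_{n_1}-u\orlnor\leq\eta/2$ and $\|u_{n_{j+1}}-u_{n_j}\orlnor\leq2^{-j+1}\eta$.

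For the majorant I would write, for each fixed $k$, the finite decomposition
\[
 u_{n_k}=w+(u-w)+(u_{n_1}-u)+\sum_{j=1}^{k-1}(u_{n_{j+1}}-u_{n_j})
\]
and apply Jensen's inequality to $\Phi(u_{n_k}/\lambda)$ using the weights $\gamma/2$ on $w$, $\beta$ on $u-w$, $\gamma/4$ on $u_{n_1}-u$, $\gamma2^{-j-2}$ on $u_{n_{j+1}}-u_{n_j}$, and the (positive) remaining mass on the vector $0$; these weights add up to $1$, and $\Phi(0)=0$ disposes of the dummy term. Because $\Phi\geq0$, discarding the tail of the sum yields $\Phi(u_{n_k}/\lambda)\leq h$ for every $k$, where
\[
 h:=\tfrac{\gamma}{2}\,\Phi\!\Big(\tfrac{2w}{\gamma\lambda}\Big)+\beta\,\Phi\!\Big(\tfrac{u-w}{\beta\lambda}\Big)+\tfrac{\gamma}{4}\,\Phi\!\Big(\tfrac{4(u_{n_1}-u)}{\gamma\lambda}\Big)+\sum_{j=1}^{\infty}\gamma2^{-j-2}\,\Phi\!\Big(\tfrac{2^{j+2}(u_{n_{j+1}}-u_{n_j})}{\gamma\lambda}\Big).
\]
It then remains to check $h\in L^1$: integrating term by term, the first term is finite since $2w/(\gamma\lambda)\in L^\infty$; the second is at most $\beta$; and for every difference the bound $\eta<\gamma\lambda/8$ makes the Luxemburg norm of the argument strictly less than $1$, so the corresponding modular is $\leq1$, while the coefficients $\gamma2^{-j-2}$ form a convergent series (and similarly for the $u_{n_1}-u$ term). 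Hence $\int_0^T h\,dt<\infty$, which together with $u_{n_k}\to u$ a.e.\ completes the proof.

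I expect the only real difficulty to be the bookkeeping of the weights in the convex decomposition: they must simultaneously sum to one (so Jensen applies), be large enough compared with the $\lphi$-norms of the respective pieces (so that each rescaled modular is finite --- this is precisely where $\Phi\notin\Delta_2$ forces the introduction of the auxiliary bounded function $w$ instead of simply dilating $u$), and still leave a summable series of coefficients (so that $h$ is integrable). The threshold $\eta<\gamma\lambda/8$ is what makes these three requirements compatible.
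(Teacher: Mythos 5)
Your proposal is correct and follows essentially the same strategy as the paper's proof: extract a rapidly convergent subsequence, telescope $u_{n_k}$ into a bounded (resp.\ $\ephi$) anchor plus small increments, and apply convexity of $\Phi$ with weights summing to one so that the majorant is a term with finite modular plus a convergent series of modulars each bounded by $1$. The only cosmetic differences are your use of fixed dyadic weights in place of the paper's weights $\lambda_j/\lambda$ built from the actual Luxemburg norms, and anchoring at $w\in L^{\infty}$ rather than at $u_0\in\ephi$.
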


\begin{proof}
  Since $d({u},\ephi)<\lambda$ and ${u}_n$ converges to ${u}$, there exist   a subsequence of $u_n$ (again denoted $u_n$), $\overline{\lambda}\in (0,\lambda)$ and $u_0\in\ephi$ such that $d(u_n,u_0)<\overline{\lambda}$, $n=1,\ldots$. As $\lphi\left([0,T],\rr^d\right)\hookrightarrow L^1\left([0,T],\rr^d\right)$, the sequence $u_n$ converges in measure to $u$. Therefore, we can extract a subsequence (denoted again $u_n$) such that $u_n\to u$ a.e. and
  \[\lambda_{n}:=\|{u}_{n}-{u}_{n-1}\orlnor<\frac{\lambda-\overline{\lambda}}{2^{n-1}},\quad\hbox{for } n\geq 2.\]
 We can assume $\lambda_n>0$ for every $n=1,\ldots$. We write $\lambda_1:=\|u_1-u_0\orlnor$ and  $\lambda_0:=\lambda-\sum_{n=1}^{\infty}\lambda_n$, and  we define $h:[0,T]\rightarrow\mathbb{R}$  by
 \begin{equation}\label{eq:serie} h(t)=  \frac{\lambda_0}{\lambda}\Phi\left(\frac{u_0}{\lambda_0}\right)+\sum_{j=0}^{\infty}\frac{\lambda_{j+1}}{\lambda}\Phi\left(\frac{u_{j+1}-u_j}{\lambda_{j+1}}\right).
\end{equation}
 As $\Phi(0)=0$ and  $\Phi$ is a convex function, we have for any $n=1,\ldots$
\[
 \begin{split}
   \Phi\left(\frac{u_n}{\lambda}\right) &=\Phi\left(  \frac{u_0}{\lambda}+   \sum_{j=0}^{n-1}\frac{u_{j+1}-u_j}{\lambda} \right)\\
   &\leq
   \frac{\lambda_0}{\lambda}\Phi\left(\frac{u_0}{\lambda_0}\right)+\sum_{j=0}^{n-1}\frac{\lambda_{j+1}}{\lambda}\Phi\left(\frac{u_{j+1}-u_j}{\lambda_{j+1}}\right) \leq h.
 \end{split}
\]

Since $u_0\in\ephi\subset \claseor$ and $\ephi$ is a subspace, we get that $\Phi(u_0/\lambda_0)\in L^1([0,T],\rr)$.
On the other hand, $\|u_{j+1}-u_j\orlnor = \lambda_{j+1}$ and therefore
\[
\int_0^T\Phi\left(\frac{u_{j+1}-u_j}{\lambda_{j+1}}\right)dt\leq 1.
\]
Then $h\in L^1([0,T],\rr)$. \end{proof}

The  proof of the following theorem follows the same lines as \cite[Thm. 3.2]{ABGMS2015}, but with some modifications due to the lack of monotonicity of $\Phi$ with  respect to the euclidean norm and the fact that  the notion of absolutely continuous norm (used intensely in \cite[Thm. 3.2]{ABGMS2015}) does not work very well in the framework of anisotropic Orlicz spaces when $\Phi \notin \Delta_2$.

\begin{thm}\label{teo:diferenciabilidad}
Let $\mathcal{L}$ be a differentiable Carath\'eodory function satisfying \eqref{eq:condicion-estructura}.
Then the following statements hold:
\begin{enumerate}
\item \label{it:T1item1} \label{A1} The action integral given by \eqref{eq:integral_accion_gen}
is finitely defined on the set $\domi_{\Lambda}:=W^{1}\lphi\cap\{u|u'\in\Pi(\ephi,\Lambda)\}$.

\item\label{it:T1item3} The function  $I$ is G\^ateaux differentiable on $\domi_{\Lambda}$ and  its derivative $I'$ is demicontinuous from 
$\domi_{\Lambda}$  into $\left[\wphi \right]^{\star}$, i.e. $I'$ is continuous when $\domi_{\Lambda}$ is equipped with the strong topology and   $\left[\wphi \right]^{\star}$ with the weak${\star}$ topology. Moreover, $I'$ is given by the following expression
\begin{equation}\label{eq:DerAccion}
\langle  I'(u),v\rangle= \int_0^T \left[\nabla_x\mathcal{L}\big(t,u,u'\big)\cdot v
+ \nabla_y\mathcal{L}\big(t,u,u'\big)\cdot v'\right] \ dt.
\end{equation}

\item\label{it:T1item4}  If  $\Phi^{\star} \in \Delta_2$ then 
  $I'$ is continuous from $\domi_{\Lambda}$ into $\left[\wphi\right]^{\star}$ when both spaces are equipped with the strong topology.
\end{enumerate}
\end{thm}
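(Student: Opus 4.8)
The plan is to establish the three items by dominated-convergence arguments whose majorants are furnished by the structure condition \eqref{eq:condicion-estructura}; the genuine difficulty, present in every limit passage, is that $\Phi$ need not satisfy $\Delta_2$, so $\lphi\neq\ephi$, the $\lphi$-norm of a general function is not absolutely continuous, and one cannot dominate naively. This is overcome by two devices: Lemma~\ref{segundo lema}, which along a subsequence turns $\lphi$-convergence $u_n'\to u'$ into an a.e.\ pointwise bound $\Phi(u_{n_k}'/\Lambda)\leq h\in L^1$, and a rescaling trick for the statements about a fixed test direction. First I record the easy facts: if $u\in\domi_\Lambda$ then $d(u',\ephi)<\Lambda$, so \eqref{eq:inclusiones} gives $\rho_\Phi(u'/\Lambda)<\infty$; with $\b a(u)\in L^\infty$ (Lemma~\ref{lem:cota-a}), \eqref{eq:condicion-estructura} then makes $t\mapsto\mathcal L(t,u,u')$ and $\nabla_x\mathcal L(\cdot,u,u')$ integrable and puts $\nabla_y\mathcal L(\cdot,u,u')\in\lpsi$. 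This proves Item~\ref{A1}, and via H\"older \eqref{holder} and Sobolev \eqref{eq:sobolev} it shows the right-hand side of \eqref{eq:DerAccion} is a bounded functional on $\wphi$, the candidate for $I'(u)$.

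\emph{The rescaling reduction.} Given a fixed $v\in\wphi$, choose $\lambda_0=\lambda_0(v)\geq1$ with $\rho_\Phi(v'/\lambda_0)<\infty$ (possible since $v'\in\lphi$) and replace $(\Phi,\Lambda)$ by $(\Phi_0,\Lambda/\lambda_0)$, $\Phi_0:=\Phi(\cdot/\lambda_0)$. One checks that $\Phi_0$ is an $N_\infty$ function, that $\mathcal L$ still satisfies \eqref{eq:condicion-estructura} for the new pair (inheritance under $\Phi\strictif\Phi_0$, with $k=\lambda_0$; this is the remark of Section~\ref{sec:diferenciabilidad}), that $W^1L^{\Phi_0}=\wphi$ and $\mathcal E^{\Phi_0}_{\Lambda/\lambda_0}=\domi_\Lambda$ with equivalent norms, and that $\Phi_0(v')\in L^1$. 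As $I$ is unchanged, in what concerns a fixed $v$ I may assume $v'\in\claseor$, writing $\Phi,\Lambda,\lambda$ for the (rescaled) data. \emph{Gâteaux differentiability (Item~\ref{it:T1item3}).} Fix $u\in\domi_\Lambda$; since $u\mapsto d(u',\ephi)$ is $1$-Lipschitz, $u+sv\in\domi_\Lambda$ for $|s|\leq s_0$. The difference quotient equals $\int_0^T\!\!\int_0^1[\nabla_x\mathcal L\cdot v+\nabla_y\mathcal L\cdot v'](t,u+\theta sv,u'+\theta sv')\,d\theta\,dt$, whose integrand tends a.e.\ to that of \eqref{eq:DerAccion}. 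A $\theta,s$-uniform $L^1$ majorant comes from convexity of $\Phi$ along the segment $s\mapsto(u'(t)+sv'(t))/\Lambda$: $\Phi((u'+\theta sv')/\Lambda)\leq\Phi((u'-s_0v')/\Lambda)+\Phi((u'+s_0v')/\Lambda)=:\tilde h$, and $\tilde h\in L^1$ since $d(u'\pm s_0v',\ephi)<\Lambda$ and \eqref{eq:inclusiones}; then \eqref{eq:condicion-estructura} bounds $|\nabla_x\mathcal L\cdot v|$ by $C(b+\tilde h)\|v\linf$ and Young \eqref{eq:young-ine} bounds $|\nabla_y\mathcal L\cdot v'|$ by $C[(b+\tilde h)+\Phi(v')]$, both in $L^1$. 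Dominated convergence gives differentiability and the formula \eqref{eq:DerAccion}.

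\emph{Demicontinuity.} Let $u_n\to u$ in $\wphi$ with $\{u_n\}\subset\domi_\Lambda$; then $\{u_n\}$ is bounded, $\b a(u_n)$ is uniformly bounded in $L^\infty$ (Lemma~\ref{lem:cota-a}), and $u_n\to u$ uniformly (Lemma~\ref{lem:inclusion orlicz}(\ref{it:embeding})). Fix $v$ with $v'\in\claseor$; by a subsequence argument it suffices to extract, from any subsequence, a further one along which $\langle I'(u_n),v\rangle\to\langle I'(u),v\rangle$. Lemma~\ref{segundo lema} applied to $\{u_n'\}$ (with $\lambda=\Lambda$) yields $u_{n_k}'\to u'$ a.e.\ and $\Phi(u_{n_k}'/\Lambda)\leq h\in L^1$; \eqref{eq:condicion-estructura} then bounds $|\nabla_x\mathcal L(\cdot,u_{n_k},u_{n_k}')|$ and $\Phi^\star(\nabla_y\mathcal L(\cdot,u_{n_k},u_{n_k}')/\lambda)$ by an $L^1$ function, continuity of $\nabla\mathcal L$ gives a.e.\ convergence of both gradients, and dominated convergence (pairing the $x$-gradient with $v\in L^\infty$, and the $y$-gradient with $v'$ through Young and $\Phi(v')\in L^1$) concludes.

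\emph{Item~\ref{it:T1item4}.} When $\Phi^\star\in\Delta_2$, H\"older \eqref{holder} and Sobolev \eqref{eq:sobolev} yield the $v$-free estimate $\|I'(u_n)-I'(u)\|_{[\wphi]^\star}\leq C\|\nabla_x\mathcal L(\cdot,u_n,u_n')-\nabla_x\mathcal L(\cdot,u,u')\|_{L^1}+2\|\nabla_y\mathcal L(\cdot,u_n,u_n')-\nabla_y\mathcal L(\cdot,u,u')\|_{\lpsi}$, so no rescaling is needed. Along the subsequence of Lemma~\ref{segundo lema} the first term $\to0$ by dominated convergence; for the second, convexity of $\Phi^\star$ gives a pointwise $L^1$ bound for $\Phi^\star\big((\nabla_y\mathcal L(\cdot,u_{n_k},u_{n_k}')-\nabla_y\mathcal L(\cdot,u,u'))/2\lambda\big)$ which $\to0$ a.e., so its modular $\rho_{\Phi^\star}\to0$, and since $\Phi^\star\in\Delta_2$ this forces $\lpsi$-norm convergence; the subsequence argument then gives strong continuity of $I'$. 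The main obstacle throughout, as indicated above, is the failure of absolute continuity of the $\lphi$-norm when $\Phi\notin\Delta_2$; it is precisely Lemma~\ref{segundo lema} together with the rescaling $\Phi\rightsquigarrow\Phi(\cdot/\lambda_0)$ that replace, in this anisotropic non-$\Delta_2$ setting, the absolutely-continuous-norm machinery used in \cite[Thm.~3.2]{ABGMS2015}.
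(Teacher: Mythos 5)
Your proof is correct and follows essentially the same route as the paper's: Lemma~\ref{segundo lema} to convert $\lphi$-convergence of derivatives into a.e.\ convergence plus an $L^1$ majorant, Lemma~\ref{lem:cota-a} and \eqref{eq:inclusiones} to make the structure condition \eqref{eq:condicion-estructura} yield integrable dominants, and dominated convergence throughout, with $\Delta_2$ for $\Phi^{\star}$ used only to pass from modular to norm convergence in the last item. Your rescaling $\Phi\rightsquigarrow\Phi(\cdot/\lambda_0)$ and the endpoint-convexity majorant $\Phi((u'\pm s_0v')/\Lambda)$ are just cleaner packagings of the paper's explicit $\lambda_v$ bookkeeping and its $(1-s_1,s_1)$ convex split; the substance is identical.
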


\begin{proof}
Let $u\in \domi_{\Lambda}$. From \eqref{eq:inclusiones} we obtain that $\Phi( u'(t)/\Lambda) \in L^1$.
Now, from  \eqref{eq:condicion-estructura} and Lemma \ref{lem:cota-a}, we have
 \begin{equation}\label{eq:cota-condicion-estructura}
   \begin{split}
|\mathcal{L}(t,u(t),u'(t))|&+ |\nabla_x\mathcal{L}(t,u(t),u'(t))|
+\Phi^{\star}\left(\frac{\nabla_y\mathcal{L}(t,u,u')}{\lambda}\right)
\\
&\leq A(\|u\sobnor ) \left[b(t)+ \Phi\left(\frac{u'(t)}{\Lambda}\right) \right]\in
 L^1.
\end{split}
\end{equation}
Thus, by integrating this inequality item \eqref{it:T1item1} is proved.

We split up the proof of item \ref{it:T1item3} into four steps.

\noindent\emph{Step 1. The non linear operator  $u \mapsto \nabla_x\mathcal{L}(\cdot,u,u')$ is continuous from $\domi_{\Lambda}$ into $L^{1}([0,T])$ with the strong topology on both sets.}

Let   $\{u_n\}_{n\in \mathbb{N}}$ be a sequence of  functions in $\domi_{\Lambda}$  
and let $u\in \domi_{\Lambda}$  such that $u_n\rightarrow u$ in $\wphi$.
By \eqref{eq:sobolev},  $u_n \to u$ uniformly.
As $u'_n\rightarrow u'\in\domi_{\Lambda}$, by 
  Lemma \ref{segundo lema}, there exist a subsequence of  $u'_{n}$ (again denoted $u'_{n}$) and a function  
	$h\in L^1([0,T],\rr)$
	such that  $u'_{n}\rightarrow u' \quad\text{a.e.}$ and $\Phi(u'_{n}/\Lambda)\leq h\quad\text{a.e}$.  

Since $u_{n}$, $n=1,2,\ldots,$ is a bounded sequence in $\wphi$, according to  Lemma \ref{lem:cota-a}, 
there exists $M>0$ such that $\|\b{a}(u_{n})\|_{L^{\infty}} \leq M$, $n=1,2,\ldots$.  
From the previous facts and \eqref{eq:cota-condicion-estructura}, we get
\begin{equation*}\label{eq:DxL1-bis}
  |\nabla_x\mathcal{L}(\cdot,u_{n},u'_{n})|\leq a(u_{n})\left[b+\Phi\left(\frac{u'_{n}}{\Lambda}\right)\right]\leq
M (b+h) \in L^1.
\end{equation*}
On the other hand, by the continuous differentiability of $\mathcal{L}$, we have
\[\nabla_x\mathcal{L}(t,u_{n_k}(t),u'_{n_k}(t))\to \nabla_x\mathcal{L}(t,u(t),u'(t))\quad\hbox{ for a.e. } t\in[0,T].\]
Applying Lebesgue Dominated Convergence Theorem we conclude the proof of step 1.

\noindent\emph{Step 2. The non linear operator   $u
 \mapsto  \nabla_y\mathcal{L}(\cdot,u,u')$ is continuous from $\domi_{\Lambda}$ with the strong topology  
into $\left[\lphi\right]^{\star}$  with the weak$\star$ topology.}

 Let $u\in \domi_{\Lambda}$.  From  \eqref{eq:cota-condicion-estructura}, it follows that 
\begin{equation}\label{eq:DyLpsi-clase}
\nabla_y\mathcal{L}(\cdot,u,u')\in \lambda C^{\Phi^{\star}}\left([0,T],\rr^d\right)\subset\lpsi\left([0,T],\rr^d\right)\subset\left[\lphi\left([0,T],\rr^d\right)\right]^{\star}.
\end{equation}

Let $u_n,u\in \domi_{\Lambda}$ such that $u_n\to u$ in the norm of $\wphi$. 
We must prove that  $\nabla_y\mathcal{L}(\cdot,u_n,u'_n)\overset{w^\star}{\rightharpoonup} 
\nabla_y\mathcal{L}(\cdot,u,u')$. 
On the contrary, there exist $v\in\lphi$, $\epsilon>0$ and a subsequence of $\{u_n\}$ (denoted  $\{u_n\}$ for simplicity)  such that
\begin{equation}\label{cota_eps}
 \left| \langle \nabla_y\mathcal{L}(\cdot,u_n,u'_n),v \rangle - 
\langle  \nabla_y\mathcal{L}(\cdot,u,u'),v \rangle\right|\geq \epsilon.
\end{equation}
We have $u_n\rightarrow u$ in $\lphi$ and
$u'_n\rightarrow u'$ with $u'\in\Pi(\ephi,\Lambda)$.
 By Lemmas \ref{cor:unif_conv}  and \ref{segundo lema}, there exist a subsequence of $\{u_n\}$ (again denoted  $\{u_n\}$ for simplicity) 
and a function $h\in L^1([0,T],\rr)$ such that 
$u_n\rightarrow u$ uniformly, $u'_n\rightarrow u' \quad\text{a.e.}$ and $\Phi(u'_n/\Lambda)\leq h\quad\text{a.e.}$ 
As in the previous step,
Lemma \ref{lem:cota-a} implies that $a(u_n(t))$ is uniformly bounded by a certain constant $M>0$. 
Therefore,   from inequality  \eqref{eq:cota-condicion-estructura} with $u_n$ instead of $u$, we have 
\begin{equation}\label{eq:Dy-suc}
  \Phi^{\star}\left(\frac{\nabla_y\mathcal{L}(\cdot,u_n,u'_n)}{\lambda}\right)   
	\leq M (b+h)=:h_1\in L^1.
\end{equation}
As $v \in \lphi$ there exists $\lambda_v>0$ such that $\Phi(v/\lambda_v)\in L^1$. 
Now, by Young's inequality and \eqref{eq:Dy-suc}, we have
\begin{equation}\label{eq:Dy_lambda-Psi}
\begin{split}
\nabla_y\mathcal{L}(\cdot,u_{n},u'_{n})\cdot v(t)
&\leq 
\lambda\lambda_v\left[\Phi^{\star}\left(\frac{\nabla_y\mathcal{L}(\cdot,u_{n},u'_{n})}{\lambda}\right)+\Phi\left(\frac{v}{\lambda_v}\right)\right]
\\
&\leq \lambda\lambda_v M (b+h)+\lambda\lambda_v  \Phi\left(\frac{v}{\lambda_v}\right)\in L^1.
\end{split}
\end{equation}
  Finally, from  Lebesgue Dominated Convergence Theorem, we deduce
\begin{equation}\label{conv_debil}
\int_0^T  \nabla_y\mathcal{L}(t,u_{n},u'_{n})
\cdot  v dt
\to 
\int_0^T \nabla_y\mathcal{L}(t,u,u')\cdot v\, dt, \end{equation}
which contradicts the inequality \eqref{cota_eps}. This completes the proof of step 2.

\emph{Step 3.} We will prove \eqref{eq:DerAccion}. 
 Note that \eqref{eq:cota-condicion-estructura},  \eqref{eq:DyLpsi-clase} and the imbeddings $\wphi \hookrightarrow L^{\infty}$ and  
$\lpsi\hookrightarrow  \left[\lphi\right]^{\star}$ imply that the second member of
\eqref{eq:DerAccion} defines an element of $\left[\wphi\right]^{\star}$.

The proof follows similar lines as \cite[Thm. 1.4]{mawhin2010critical}. 
For $u\in \domi_{\Lambda}$ and $0\neq v\in\wphi$, we define the function
\[H(s,t):=\mathcal{L}(t,u(t)+s v(t),u'(t)+sv'(t)).\]

For  $|s|\leq s_0:=\left(\Lambda-d(u',\ephi)\right)/\|v\sobnor$  we have that  $u'+sv' \in \Pi(\ephi,\Lambda)$. 
This fact implies, in virtue of Theorem \ref{teo:diferenciabilidad} item \ref{it:T1item1}, 
that $I(u+s v)$ is well defined and finite for $|s|\leq s_0$.

We write $s_1:=\min\{s_0,1-d(u',\ephi)/\Lambda\}$. Let $\lambda_v>0$ such that $\Phi(v'/\lambda_v)\in L^1$. As $u'\in\Pi(\ephi,\Lambda)$, then

\[
d\left(\frac{u'}{(1-s_1)\Lambda},E^{\Phi}\right)=\frac{1}{(1-s_1)\Lambda}d(u', E^{\Phi})<1,
\]
and consequently  $(1-s_1)^{-1}\Lambda^{-1}u'\in C^\Phi$. Hence,  if $v'\in\lphi$ and $|s|\leq s_1 \Lambda\lambda_v^{-1}$, from the convexity of $\Phi$ and \ref{eq:escalar_ine_2}, we get
\begin{equation}\label{eq:cota-u+sv}
\begin{split}
\Phi\left(\frac{u'+sv'}{\Lambda}\right)&
\leq
(1-s_1)\Phi\left(\frac{u'}{(1-s_1)\Lambda}\right)+s_1 \Phi\left(\frac{s}{s_1\Lambda}v'\right)
\\
&\leq
(1-s_1)\Phi\left(\frac{u'}{(1-s_1)\Lambda}\right)+s_1 \Phi\left(\frac{v'}{\lambda_v}\right)\\
&=:h(t) \in L^1.
\end{split}
\end{equation}

We also have
$
\|u+sv\sobnor\leq \|u\sobnor+s_0\|v\sobnor;
$
then, by Lemma \ref{lem:cota-a}, there exists $M>0$ independent of $s$, such that
$\|a(u+sv)\linf\leq M$. Now, applying Young's inequality,  \eqref{eq:cota-condicion-estructura},
the fact that $v \in L^{\infty}$, \eqref{eq:cota-u+sv} and $\Phi(v'/\lambda_v)\in L_1$, 
we get
\begin{equation}\label{ctg}
\begin{split}
|D_s H(s,t)|&=\left| \nabla_x\mathcal{L}(t,u+sv,u'+sv')\cdot v +
  \nabla_y\mathcal{L}(t, u+s v, u'+sv')\cdot v'\right| \\
  & \leq M \left[ b(t)+ \Phi\left(\frac{u'+sv'}{\Lambda}\right)\right]|v|\\
 &\quad+ \lambda\lambda_v\left[\Phi^{\star}\left(\frac{\nabla_y\mathcal{L}(t,u+sv,u'+sv')}{\lambda}\right)+\Phi\left(\frac{v'}{\lambda_v}\right) \right]
\\
 &\leq 
 M \left[ b(t)+ \Phi\left(\frac{u'+sv'}{\Lambda}\right)\right] \left(|v|+\lambda\lambda_v\right) +\lambda\lambda_v \Phi\left(\frac{v'}{\lambda_v}\right)\\
  &\leq 
 M \left( b(t)+h(t)\right) \left(|v|+\lambda\lambda_v\right)+\lambda\lambda_v \Phi\left(\frac{v'}{\lambda_v}\right)
 \in L^1.
\end{split}
\end{equation}

Consequently, $I$ has a directional derivative and
\[
\langle I'(u),v \rangle=\frac{d}{ds}I(u+s v)\big|_{s=0}=\int_0^T  
\left[\nabla_x\mathcal{L}(t,u,u')\cdot v+ \nabla_y\mathcal{L}(t,u,u')\cdot v'\right] \ dt.
\]
Moreover, from the previous formula, \eqref{eq:cota-condicion-estructura},  \eqref{eq:DyLpsi-clase} and
Lemma \ref{lem:inclusion orlicz}, we obtain
\[
|\langle I'(u),v \rangle| \leq \|\nabla_x\mathcal{L}\|_{L^1} \| v\linf + 
\|\nabla_y\mathcal{L}\|_{\lpsi} \|v'\orlnor \leq C \|v\sobnor,
\]
with an appropriate constant $C$.
This completes the proof of the G\^ateaux differentiability of $I$. 
The previous steps imply the demicontinuity of the operator $I':\domi_{\Lambda}  \to \left[\wphi_d
\right]^{\star} $.

In order to prove item  \ref{it:T1item4}, it is necessary to see that the maps \linebreak[4]$u\mapsto \nabla_x\mathcal{L}(t,u,u')$  
and $u\mapsto \nabla_y\mathcal{L}(t,u,u')$  are norm continuous
from $\domi_{\Lambda} $ into $L^1$ and
 $\lpsi$, respectively.  
It remains to prove the continuity of the second map. 
To this purpose, we take  $u_n, u \in \domi_{\Lambda}$, $n=1,2,\dots$, with $\|u_n- u\sobnor\to 0$.  
As before, we can deduce the existence of a subsequence (denoted $u'_n$ for simplicity) and $h_1 \in L^1$ such that \eqref{eq:Dy-suc} holds and $u_n \to u$ a.e.
 Since $\Phi^{\star}\in\Delta_2$, we have 
\begin{equation}
\Phi^{\star}(\nabla_y \mathcal{L}(\cdot,u_n,u'_n))\leq c(\lambda) \Phi^{\star}\left(\frac{\nabla_y \mathcal{L}(\cdot,u_n,u'_n)}{\lambda}\right)+1\leq c(\lambda)h_1+1=:h_2\in L^1.
\end{equation} 
Then, from \ref{eq:quasi-sub-aditividad}, we get
\[\Phi^{\star}\left(\nabla_y \mathcal{L}(\cdot,u_n,u'_n)-
\nabla_y \mathcal{L}(\cdot,u,u')\right)\leq K (h_2+\Phi^{\star}(\nabla_y \mathcal{L}(\cdot,u,u')))+1.\]
Now, by Lebesgue Dominated Convergence Theorem, we obtain 
$\nabla_y \mathcal{L}(\cdot,u_n,u'_n)$ is $\rho_{\Phi^{\star}}$ modular convergent to $\nabla_y \mathcal{L}\left(\cdot,u,u')\right)$, i.e.
$\rho_{\Phi^{\star}}(u_n-u)\to 0$. 
Since $\Phi^{\star} \in \Delta_2$, modular convergence implies norm convergence (see \cite{Skaff1969}).
\end{proof}

\begin{proof}[\textbf{Proof Theorem \ref{th:tor_prin}}] Suppose that $d(u',L^{\infty})<1$.
Since $d(u',\ephi)=d(u',L^{\infty})$, according to Remark \ref{com:lphi-satis S} and Theorem \ref{teo:diferenciabilidad}, $I$ is G\^ateaux differentiable at $u$. By Fermat's rule (see \cite[Prop. 4.12]{clarke2013functional}), we have $\langle I'(u),v\rangle=0$ for every $v\in H$. Therefore

\begin{equation}\label{eq:der_deb}\int_0^T\nabla\Phi(u'(t))\cdot v'(t)dt=-\int_0^T \nabla_xF(t,u(t))\cdot v(t)dt.\end{equation}

From Theorem \ref{teo:diferenciabilidad}, $\nabla_xF(t,u(t))\in L^1([0,T],\rr^d)$ and  $\nabla\Phi\left(u'(t)\right)\in\linebreak\lpsi([0,T],\rr)\hookrightarrow L^1([0,T],\rr)$.  Identity \eqref{eq:der_deb} holds for every $v\in C^{\infty} ([0,T],\rr^d)$ with $v(0)=v(T)$. Using 
\cite[Fundamental Lemma, p. 6]{mawhin2010critical}, we get that $\nabla\Phi(u'(t))$ is absolutely continuous and $(d/dt)\left(\nabla\Phi(u'(t))\right) = \nabla_xF(t,u(t))$ a.e. on $[0,T]$. Moreover, $\nabla\Phi(u'(0))=\nabla\Phi(u'(T))$. Since  $\Phi$ is \emph{strictly convex},  then  $\nabla\Phi:\mathbb{R}^d\to\mathbb{R}^d$ is a one-to-one map  (see, e.g. \cite[Ex. 4.17, p. 67]{clarke2013functional}). Hence, we conclude that $u'(0)=u'(T)$. Finally, Theorem \ref{th:tor_prin} is proven.
\end{proof}

\begin{comentario} The condition $d(u',L^{\infty})< 1$ is trivially satisfied when $\Phi \in \Delta_2$ because, in this case, $L^{\infty}$ is dense in $\lphi([0,T],\rr^d)$. In particular, our Theorem \ref{th:tor_prin} implies existence of solutions, among others, for the $(p_1,p_2)$-laplacian system. 
\end{comentario}

It is possible to use the regularity theory in order to get  that minimizers $u$ of $I$ satisfy $u'\in L^{\infty}$. To cite one example, we have the following result.

\begin{cor} Let $\Phi,F$ and $H$ be as in Theorem \ref{th:tor_prin}. Additionally, suppose that
$F(t,x)$ is differentiable with respect to $(t,x)$ and that 
\begin{equation}\label{eq:apriorilip}\left| \frac{\partial}{\partial t} F(t,x)\right|  \leq a(x)b(t),\end{equation}
with $a$ and $b$ as in \ref{item:condicion_a}.
 If $u$ is a minimum of $I$ on the set $H$ then $u$ is solution of \eqref{eq:ProbPhiLapla}. 
 
\end{cor}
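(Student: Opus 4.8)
The plan is to invoke Theorem~\ref{th:tor_prin} after upgrading the regularity of a minimizer $u$ of $I$ on $H$. By the Remark following Theorem~\ref{th:tor_prin}, the functional $I$ indeed attains a minimum $u$ on $H$, and automatically $d(u',L^{\infty})\leq 1$ because $\rho_{\Phi}(u')<\infty$ forces $u'\in\Phi\cdot C^{\Phi}\subset\overline{\Pi(E^{\Phi},1)}$ via \eqref{eq:inclusiones}. So the only thing standing between us and Theorem~\ref{th:tor_prin} is the borderline case $d(u',L^{\infty})=1$; the goal is to rule it out, or rather to show directly that $u'\in L^{\infty}$ so that $d(u',L^{\infty})=0<1$.

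First I would observe that, even without knowing a priori that $d(u',L^{\infty})<1$, one can still extract a weak Euler--Lagrange identity at the ``safe'' directions: for $v\in\wphi$ with $d(v',E^{\Phi})$ small enough (concretely, for $v$ ranging over $\lip([0,T],\rr^d)$ with $v(0)=v(T)$, since then $d(v',L^{\infty})=0$), the perturbations $u+sv$ stay in $\mathcal{E}^{\Phi}_{\Lambda}$ for small $s$ whenever $d(u',E^{\Phi})<\Lambda$. Taking $\Lambda<1$ close to $1$ as permitted by Remark~\ref{com:lphi-satis S}, the argument in Step~3 of Theorem~\ref{teo:diferenciabilidad} still yields $\frac{d}{ds}I(u+sv)|_{s=0}=0$, i.e.
\begin{equation*}
\int_0^T\nabla\Phi(u'(t))\cdot v'(t)\,dt=-\int_0^T\nabla_xF(t,u(t))\cdot v(t)\,dt
\end{equation*}
for all Lipschitz $v$ with $v(0)=v(T)$. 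Since $\nabla\Phi(u')\in L^{\Phi^{\star}}\hookrightarrow L^1$ and $\nabla_xF(\cdot,u)\in L^1$, the Fundamental Lemma (\cite[p.~6]{mawhin2010critical}) gives that $w(t):=\nabla\Phi(u'(t))$ is absolutely continuous with $w'(t)=\nabla_xF(t,u(t))$ a.e., and $w(0)=w(T)$.

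Next I would exploit the extra hypothesis \eqref{eq:apriorilip}. The point is that $w=\nabla\Phi(u')$ is now an absolutely continuous, hence bounded, function on $[0,T]$: indeed $|w(t)|\leq|w(0)|+\int_0^T|\nabla_xF(s,u(s))|\,ds$, and by Lemma~\ref{lem:cota-a} together with \ref{item:condicion_a} the integrand is bounded by $A(\|u\sobnor)\,b(s)\in L^1$. Thus $\nabla\Phi(u'(t))$ takes values in a bounded set $K\subset\rr^d$. Since $\Phi$ is an $N_{\infty}$ function, $\nabla\Phi$ is surjective onto $\rr^d$ (by \eqref{eq:N-sub-inf} and Moreau's theorem $\Phi^{\star\star}=\Phi$, so $\nabla\Phi(y)=\zeta\iff y\in\partial\Phi^{\star}(\zeta)$, and $\partial\Phi^{\star}(\zeta)$ is nonempty, compact for each $\zeta$ because $\Phi^{\star}$ is finite and continuous); moreover strict convexity of $\Phi$ makes $\nabla\Phi$ injective, and the inverse map $(\nabla\Phi)^{-1}=\nabla\Phi^{\star}$ is continuous, hence bounded on the compact set $\overline{K}$. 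Therefore $u'(t)=\nabla\Phi^{\star}(w(t))$ lies in the bounded set $\nabla\Phi^{\star}(\overline{K})$ for a.e.\ $t$, i.e.\ $u'\in L^{\infty}([0,T],\rr^d)$. In particular $d(u',L^{\infty})=0<1$, so $u$ satisfies the hypotheses of Theorem~\ref{th:tor_prin}, and we conclude that $u$ is a solution of \eqref{eq:ProbPhiLapla}.

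The main obstacle I anticipate is the bootstrapping subtlety in the second paragraph: one must be careful that the Euler--Lagrange identity is available \emph{before} knowing $u'\in L^{\infty}$, since Theorem~\ref{teo:diferenciabilidad} only gives G\^ateaux differentiability on $\mathcal{E}^{\Phi}_{\Lambda}$ and a priori $u$ might sit exactly on the boundary $d(u',E^{\Phi})=1$; the resolution is to use $\Lambda<1$ from Remark~\ref{com:lphi-satis S} so that $\mathcal{E}^{\Phi}_{\Lambda}$ already contains $u$ once we have shown $d(u',E^{\Phi})\le 1$ is in fact $<1$ — which is itself the conclusion. A cleaner route that sidesteps this circularity: first use \eqref{eq:apriorilip} to note that $t\mapsto F(t,u(t))$ is more regular and run the classical du Bois-Reymond argument on the ``reduced'' one-dimensional problem (as in \cite[Thm.~1.4]{mawhin2010critical}) to obtain boundedness of $u'$ directly, or observe that since $\rho_\Phi(u')<\infty$ one has $d(u',E^\Phi)\le 1$ and the strict inequality fails only on a nowhere-dense set, then perturb within $L^\infty$ directions only — for which differentiability of $I$ along the ray $u+sv$ holds as long as $u+sv$ stays in $\{d(\cdot',E^\Phi)<1\}\cup\{d(u',E^\Phi)=1, v'\in L^\infty\}$, and use lower semicontinuity to handle the endpoint. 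Either way, the heart of the matter is the last paragraph's observation that \eqref{eq:apriorilip} forces $\nabla\Phi(u')$ to be bounded, whence $u'=\nabla\Phi^{\star}(\nabla\Phi(u'))$ is bounded by continuity of $\nabla\Phi^{\star}$.
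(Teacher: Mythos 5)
There is a genuine gap, and you have in fact put your finger on it yourself without closing it: the circularity in your derivation of the Euler--Lagrange identity. To get $\frac{d}{ds}I(u+sv)\big|_{s=0}=0$ you need the differentiability of Theorem \ref{teo:diferenciabilidad}, which (combined with Remark \ref{com:lphi-satis S}) is only available on $\domi_{\Lambda}$ with $\Lambda<1$; the domination in Step 3 requires $\Phi\left(u'/((1-s_1)\Lambda)\right)\in L^1$, which follows from $d(u',\ephi)<\Lambda$ and fails in general when $d(u',\ephi)=1$ and $\Phi\notin\Delta_2$. Knowing only $\rho_{\Phi}(u')<\infty$, i.e. $u'\in\claseor$, gives $d(u',\ephi)\leq 1$ via \eqref{eq:inclusiones} but says nothing about integrability of $\Phi(u'/\Lambda)$ for $\Lambda<1$. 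Restricting to Lipschitz test directions $v$ does not help, because the obstruction sits in the $u'$ term of the bound \eqref{eq:cota-u+sv}, not in $v'$. Your proposed resolution --- take $\Lambda$ close to $1$ ``once we have shown $d(u',\ephi)<1$, which is itself the conclusion'' --- is, as you admit, circular, and the two alternative routes sketched at the end are not carried out. A telling symptom: your argument never actually uses the extra hypothesis \eqref{eq:apriorilip} (the bound on $|\nabla_xF(s,u(s))|$ in your third paragraph comes from \ref{item:condicion_a}, not from \eqref{eq:apriorilip}); if the proof were correct as written, the corollary would hold without \eqref{eq:apriorilip}, and the hypothesis $d(u',L^{\infty})<1$ in Theorem \ref{th:tor_prin} would be redundant.

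The paper takes an entirely different and much shorter route that avoids the Euler--Lagrange equation altogether at this stage: since $u(0)=u(T)$, a minimizer of $I$ over $H$ is also a minimizer over the fixed-endpoint class $\{v\in\wphi([0,T],\rr^d)\,|\, v(0)=u(0),\ v(T)=u(T)\}$, and the Lipschitz regularity result \cite[Prop. 3.1]{clarke1985regularity} --- whose hypotheses are exactly where \eqref{eq:apriorilip} enters, since together with \ref{item:condicion_a} it gives local Lipschitz continuity of the Lagrangian in $(t,x)$ --- yields $u'\in L^{\infty}$ directly from minimality. Then $d(u',L^{\infty})=0<1$ and Theorem \ref{th:tor_prin} applies. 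Your closing observation (that once $\nabla\Phi(u')$ is absolutely continuous and bounded, $u'=\nabla\Phi^{\star}(\nabla\Phi(u'))$ is bounded by continuity of $\nabla\Phi^{\star}$) is correct in itself, but it presupposes the Euler--Lagrange equation, which is precisely the step that is blocked.
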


\begin{proof} We note that $u$ is a minimum of $I$ on the set defined by a Dirichlet boundary condition
\[\{v\in\wphi([0,T],\rr^d)| v(0)=u(0), v(T)=u(T)\}.\]
Therefore, we can apply Proposition 3.1 in \cite{clarke1985regularity} (see also the following remark) and we obtain $u'\in L^{\infty}$.
\end{proof}

\begin{comentario}\label{com:ejemplo} Returning to the system \eqref{eq:RandExample} of Example \ref{ex:RandEx},
we note that the $N_{\infty}$ function $\Phi(y_1,y_2)=\exp(y_1^2+y_2^2)-1$ has a  complementary function which satisfies the $\Delta_2$-condition (see \cite[p. 28]{KR}).  In addition, for every $p>1$ we have $|(y_1,y_2)|^p \llcurly \Phi(y_1,y_2)$. Therefore $\Phie(y_1,y_2)\llcurly|(y_1,y_2)|^q $ for $q=p/(p-1)$. Consequently,  if $F(t,x_1,x_2)=P(t)Q(x_1,x_2)$ with $P$ and $Q$ polynomials, and $d(t):=C\max\{1,|P(t)|\}$  then $\Phie(d^{-1}(t)\nabla_xF)\leq |(x_1,x_2)|^q+1$,  where $p$ and $C$ are chosen large enough. Hence $\Phi$ and $F$ satisfy \eqref{eq:cond-sub} with $\Phi_0(y_1,y_2)=|(y_1,y_2)|^p$.  The conditions  \ref{item:condicion_c}, \ref{item:condicion_a}  and \eqref{eq:apriorilip} can be proved in a direct way. All these facts show that the assessment of Example \ref{ex:RandEx} is true.
 
\end{comentario}

\section*{Acknowledgments}
The authors are partially supported by  UNRC and UNLPam grants. The second author is  partially supported by a  UNSL grant.

 \bibliographystyle{plain}
 
\bibliography{biblio}

\end{document}